\definecolor{red}{rgb}{1,0.00,0.00}
\definecolor{blue}{rgb}{0,0.08,0.55}
\definecolor{green}{rgb}{0.0,0.6,0}
\author{Ievgen Bondarenko, Tullio Ceccherini-Silberstein,\\ Alfredo Donno, Volodymyr Nekrashevych}
\title{\textbf{On a family of Schreier graphs of intermediate growth associated with a self-similar group}}
\newcommand{\nucl}{\mathcal{N}}
\newcommand{\Cof}{\textrm{Cof}\,}
\newcommand{\gr}{\Gamma}
\newcommand{\Sym}{\textrm{Sym}}
\newcommand{\Aut}{\mathop{\rm Aut}\nolimits}
\newtheorem{thm}{Theorem}
\newtheorem{prop}[thm]{Proposition}
\newtheorem{cor}[thm]{Corollary}
\newtheorem{lemma}{Lemma}
\theoremstyle{definition}
\begin{document}
\maketitle

\begin{abstract}
For every infinite sequence $\omega=x_1x_2\ldots$, with
$x_i\in\{0,1\}$, we construct an infinite $4$-regular graph $X_{\omega}$.
These graphs are precisely the Schreier graphs of the action of a certain
self-similar group on the space $\{0,1\}^{\infty}$. We solve the
isomorphism and local isomorphism problems for these graphs, and
determine their automorphism groups.
Finally, we prove that all graphs $X_\omega$ have intermediate growth.\\

\noindent \textbf{Keywords}: self-similar group, Schreier graph,
intermediate growth, local isomorphism.

\noindent \textbf{Mathematics Subject Classification 2010}: 20F65,
05C63, 05C60, 20E08, 05C25.
\end{abstract}


\section{Introduction}
In  \cite{CFS}, F. Fiorenzi, F. Scarabotti and the second named
author of the present paper gave an exposition of some results of
M. Gromov from \cite{gromov} on symbolic dynamics on infinite
graphs. Let $X = (V,E)$ be a graph of uniformly bounded valence.
Let $r \geq 0$, $u \in V$ and denote by $B_r(u) = \{w \in V:
d(u,w) \leq r\}$ the \emph{ball of radius r centered at u}, where
$d \colon V \times V \to \mathbb{R}^{+}$ is the geodesic distance
on $X$. Given two vertices $u,v\in V$, one says that $u \sim_r v$
if there exists a graph isomorphism $\psi_r(u,v) \colon B_r(u) \to
B_r(v)$ such that $\psi_r(u)= v$. It is immediate that $\sim_r$ is
an equivalence relation on $V$; the corresponding equivalence
classes are called \emph{r-classes}. Since $X$ has uniformly
bounded valence, for each integer $r \geq 0$ there exist only
finitely many $r$-classes for the vertices of $X$. The collection
$P(X)$ of all graph isomorphisms of the form $\psi_r(u,v)$, where
$u\sim_r v$, $r \geq 0$, constitutes the \emph{pseudogroup} of
\emph{partial isometries} of the graph $X$. Gromov called such a
pseudogroup \emph{of dense holonomy} provided that for every $r >
0$ there exists a $D_r > 0$ such that for every $u,v \in V$ there
exists $w \in V$ such that $d(u,w) < D_r$ and $v \sim_r w$. For
example, if $G$ is a finitely generated group, $S \subset G$ is a
finite symmetric generating subset, then the Cayley graph $X =
\mathcal{C}(G,S)$ of $G$ with respect to $S$ is a regular graph of
degree $\vert S \vert$ (and thus of uniformly bounded valence).
Moreover, for all $u,v \in V$, the left multiplication by $h =
vu^{-1}$ yields a graph isomorphism $B_r(u) \to B_r(v)$ such that
$u \mapsto v$. It follows that for each $r \geq 0$ there exists a
unique $r$-class in $X$. In fact $G \hookrightarrow \Aut(X)$ and
this constitutes a trivial example of such dense holonomy
pseudogroups of isometries of a (bounded valence) graph. A. \.Zuk
asked for non-trivial examples of such graphs $X$, possibly with a
trivial automorphism group $\Aut(X)$, and in \cite[Example
3.23]{CFS} an explicit example is provided. Consider the Cayley
graph $X = \mathcal{C}(\mathbb{Z}, \{\pm 1\}) = (V,E)$ and add new
edges according to the following recursive rule. We first connect
all pairs of vertices of the form $(2n,2n+2)$, where $n \in
\mathbb{Z}$. After this step, all vertices corresponding to even
integers have degree $4$. Consider now the remaining vertices of
degree $2$: these are the odd vertices. Note that $1$ and $-1$ are
the two vertices of degree $2$ which are closest to $0$. We choose
the vertex $1$ and we then connect all pairs of vertices of the
form $(2n+1,2n+5)$, where $n \in \mathbb{Z}$. This way, also the
odd vertices which are congruent to $1$ mod $4$ now have degree
$4$. Observe that the vertices that still have degree $2$ are the
odd vertices which are congruent to $3$ mod $4$: in particular,
the vertex of degree $2$ which is closest to $0$ is $-1$. We then
connect all pairs of vertices of the form $(2n-1,2n+7)$, where $n
\in \mathbb{Z}$. This way, also the odd vertices which are
congruent to $-1$ mod $8$ now have degree $4$. And so on (see
Section \ref{section2} for more details). The resulting graph $X'$
(which in the present paper is denoted by $X_{(10)^\infty}$) is
regular of degree $4$, its pseudogroup of partial isometries has
dense holonomy and, moreover, $\Aut(X')$ is trivial. This last
result is easily deduced from the following fact. As one easily
checks, the graph $X'$ is \emph{generic}, in the sense that for
all $u,v\in V(X')$ there exists $r>0$ such that $u\not\sim_r v$.
As genericity is equivalent to the triviality of the automorphism
group of the graph, this gives our claim. In \cite{CFS}, it is
also shown that $X'$ is an amenable graph and it is observed,
after a remark of the last named author of the present paper, that
$X'$ is the Schreier graph associated with the action of a
self-similar group on the boundary of the rooted binary tree.

In \cite{omega_periodic}, I. Benjamini and C. Hoffman considered a
family of amenable graphs, called {\it $\omega$-periodic graphs},
whose construction is similar to that of $X'$. In particular,
their \lq\lq basic example\rq\rq corresponds to the graph that in
the present paper is denoted by $X_{0^\infty}$. They proved that
this graph has intermediate growth, that is, it is superpolynomial
and subexponential and they also proved, after a remark of L.
Bartholdi, that this graph is an example of Schreier graph. They
also provided examples within the family of $\omega$-periodic
graphs having polynomial (resp. exponential) growth.

In the present paper, with each right-infinite sequence $\omega =
x_1x_2\ldots \in \{0,1\}^\infty$, we associate an infinite
$4$-regular graph $X_\omega$. Moreover, after saying that two
sequences $\omega = x_1x_2\ldots$ and $\omega'=y_1y_2\ldots$ are
{\it cofinal} (resp. {\it anticofinal}) provided that there exists
$i_0$ such that $x_i=y_i$ (resp. $x_i = 1-y_i$), for every $i\geq
i_0$, we prove the following results:
\begin{itemize}
\item $X_\omega \cong X_{\omega'}$ if and only if $\omega$ and
$\omega'$ are either cofinal or anticofinal (Theorem
\ref{thm_IsomProblem});
\item $\Aut(X_{\omega})$ is trivial if
$\omega$ is neither cofinal nor anticofinal to $0^\infty$, and
$\Aut(X_\omega) = \mathbb{Z}/2\mathbb{Z}$ otherwise (Corollary
\ref{cor_AutomorphisGroup});
\item $X_{\omega}$ and $X_{\omega'}$ are locally isomorphic if and only if either
both $\omega$ and $\omega'$ are cofinal or anticofinal with
$0^{\infty}$, or both $\omega$ and $\omega'$ are neither cofinal
nor anticofinal with $0^{\infty}$ (Theorem \ref{thm_LocalIsom});
\item $X_{\omega}$ has dense holonomy and is generic if and only if
$\omega$ is neither cofinal nor anticofinal with $0^\infty$
(Theorem \ref{thmholonomy});
\item for each $\omega$, the graph $X_\omega$ is isomorphic to the
orbital Schreier graph $\Gamma_\omega$ of the word $\omega$ under
the action of a self-similar group $G$ (Theorem
\ref{thmschreierisom});
\item $X_\omega$ has intermediate growth, and therefore is amenable, for all $\omega$
(Theorem \ref{thmintermediate}).
\end{itemize}

In the Appendix, a detailed study of finite Gelfand pairs
associated with the action of the group $G$ on each level of the
rooted binary tree is presented. This leads in particular to a
description of the decomposition of the corresponding permutations
representation into irreducible submodules and to an explicit
expression for the associated spherical functions. The key step is
to prove that the action of $G$ on each level of the tree is
$2$-point homogeneous. Incidentally, this automatically gives the
symmetry of these Gelfand pairs.

\section{Definition of the graphs $X_{\omega}$ and the isomorphism
problem}\label{section2}

Consider the binary alphabet $\{0,1\}$, and let $\{0,1\}^{\infty}$
be the set of all (right-)infinite sequences $x_1x_2\ldots$ with
$x_i\in\{0,1\}$. We associate an infinite 4-regular graph
$X_{\omega}=(V_{\omega},E_{\omega})$ with each sequence $\omega\in
\{0,1\}^{\infty}$. The vertex set $V_{\omega}$ of every graph
$X_\omega$ is the set $\mathbb{Z}$ of integer numbers. The edge
set $E_{\omega}$ depends on the sequence $\omega=x_1x_2\ldots$ and
is defined as follows. For every $n\geq 1$, we set
\begin{eqnarray}\label{eq_defi_an}
a_n^{\omega} = x_1+x_2 2+ x_3 2^2+\cdots+x_{n-1}2^{n-2}-\overline{x}_n2^{n-1}=\sum_{i=1}^nx_i 2^{i-1}-2^{n-1},
\end{eqnarray}
where $\overline{x}= 1 - x$, for each $x\in\{0,1\}$. Notice that
if $\omega=0^{\infty}$ (resp. $\omega=1^{\infty}$) one has
$a_n^{\omega}=-2^{n-1}$ (resp. $a_n^{\omega}=2^{n-1}-1$) for all
$n\geq 1$. In the general case, we get the inequalities
$$
-2^{n-1}\leq a_n^{\omega}\leq 2^{n-1}-1,
$$
for every $n\geq 1$. Put $E^0_{\omega} = \{(z,z+1): z\in
\mathbb{Z}\}$ and, for every $n\geq 1$, define
\[
E^n_{\omega} = \{(2^nz-a_n^{\omega},2^n(z+1)-a_n^{\omega}): z\in
\mathbb{Z}\}.
\]
Then the edge set $E_{\omega}$ of the graph $X_{\omega}$ is given
by the disjoint union $\coprod_{n=0}^{\infty}E^n_{\omega}$, with
possibly a loop rooted at the (unique) vertex which is not
incident to any edge of $\coprod_{n=1}^{\infty}E^n_{\omega}$.

The graph $X_{\omega}$ can be constructed step by step by reading
consequently the letters of the binary sequence $\omega$ and
adding the edges from the set $E^n_{\omega}$. Denote by
$X^0_{\omega}$ the graph with the vertex set $\mathbb{Z}$ and the
edge set $E^0_{\omega}$ and, for each $n\geq 1$, let
$X^n_{\omega}$ be the graph with the vertex set $\mathbb{Z}$ and
the edge set $\coprod_{k=0}^nE^k_{\omega}$. Note that all vertices
of the graph $X^0_{\omega}$ have degree $2$, while the graph
$X^n_{\omega}$, for $n\geq 1$, contains also vertices of degree
$4$. Suppose we have constructed the graph $X^{n-1}_{\omega}$ and
we read the $n$-th letter $x_n$ of the sequence $\omega$. If
$x_n=0$, then we find the smallest positive integer which has
degree $2$, viewed as a vertex of the graph $X^{n-1}_{\omega}$
(the first vertex of degree $2$ to the right from zero). If
$x_n=1$, then we find the largest nonpositive integer which has
degree $2$ as a vertex of $X^{n-1}_{\omega}$ (the first vertex of
degree $2$ to the left from zero). In both cases this integer
number is precisely $-a_n^{\omega}$, see Figure \ref{FIG.1}. Then
the graph $X^n_{\omega}$ is obtained from the graph
$X^{n-1}_{\omega}$ by connecting all second consecutive vertices
which are congruent to $-a^{\omega}_n$ modulo $2^n$ (they are of
degree $2$ in $X^{n-1}_{\omega}$). These vertices become the new
vertices of degree $4$ in the graph $X^n_{\omega}$ and the
corresponding new edges constitute the set $E_{\omega}^n$. In
particular, the vertex of $X_{\omega}$ corresponding to the
integer $-a_n^{\omega}$ is the closest vertex to 0 which is
incident to an edge in $E^n_{\omega}$, in formulae,
\begin{eqnarray}\label{eq_a_n_minimon}
|a^{\omega}_n|= \min\left\{|z|: z\in \mathbb{Z}\setminus
\coprod_{i=1}^{n-1}(2^i\mathbb{Z}-a^{\omega}_i)\right\}.
\end{eqnarray}

\begin{figure}[h]
\begin{center}
\psfrag{0}{$0$} \psfrag{1}{$1$} \psfrag{00}{$00$}
\psfrag{01}{$01$} \psfrag{10}{$10$} \psfrag{11}{$11$}
\psfrag{zero}{$0$} \psfrag{zero2} {$-a^\omega_1$} \psfrag{zero4}
{$-a^\omega_2$} \psfrag{X0} {$X_{\omega}^0$}
\psfrag{X1}{$X_{\omega}^1$} \psfrag{X2}{$X_{\omega}^2$}
\psfrag{a}{$-a^\omega_1$} \psfrag{b}{$-a^\omega_2$}
\psfrag{c}{$-a^\omega_2$} \psfrag{d}{$-a^\omega_2$}
\psfrag{-2}{$-2$} \psfrag{-1}{$-1$}\psfrag{dot}{$\cdot$}
\psfrag{uno}{$1$}\psfrag{due}{$2$} \psfrag{w}{.} \psfrag{ww}{.}
\psfrag{www}{.} \psfrag{wwww}{.}
\epsfig{file=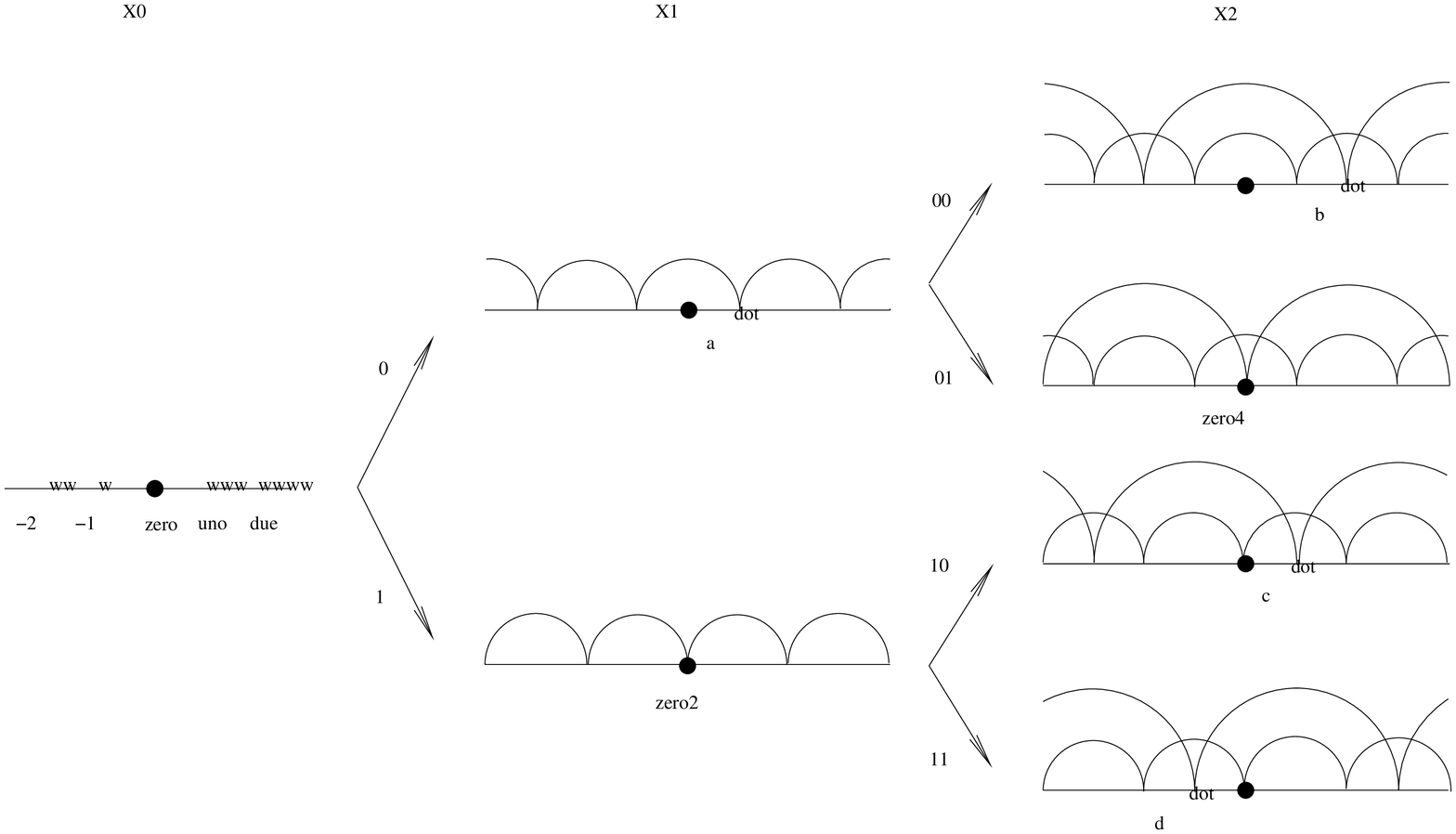,width=1\textwidth} \caption{The
construction of the graphs $X_\omega^0$, $X_\omega^1$ and
$X_\omega^2$.\label{FIG.1}}
\end{center}
\end{figure}


If the sequence $\omega$ contains both infinitely many $0$'s and
$1$'s, then each vertex of the graph $X_{\omega}$ is incident to
an edge of $\coprod_{n=1}^{\infty}E^n_{\omega}$ and so there is no
loop in $X_{\omega}$. Indeed, if there exists a vertex $v$ which
is not incident to any edge of
$\coprod_{n=1}^{\infty}E^n_{\omega}$ and this vertex corresponds
to a nonpositive (resp. positive) integer, then there exists
$n_0\geq 1$ such that $x_n=0$ (resp. $x_n=1$) for all $n\geq n_0$.
On the other hand, if $\omega = 0^{\infty}$, then $0$ is the
unique vertex of $X_{0^{\infty}}$ which is not incident to any
edge of $\coprod_{n=1}^{\infty}E^n_{\omega}$ and so there is a
loop at $0$. Similarly, if $\omega=1^{\infty}$, then $1$ is the
unique vertex of $X_{1^{\infty}}$ which is not incident to any
edge of $\coprod_{n=1}^{\infty}E^n_{\omega}$ and so there is a
loop at $1$. In the general case, if $\omega=x_1x_2\ldots
x_n0^{\infty}$ then the graph $X_{\omega}$ has a loop at the
vertex $-\sum_{i=1}^n x_i 2^{i-1}$; similarly, if
$\omega=x_1x_2\ldots x_n1^{\infty}$ then the graph has a loop at
$1-\sum_{i=1}^n (x_i-1) 2^{i-1}$.
\begin{figure}[h]
\begin{center}
\psfrag{0}{0}\psfrag{-1}{-1}\psfrag{-2}{-2}\psfrag{-3}{-3}\psfrag{-4}{-4}\psfrag{-5}{-5}\psfrag{-6}{-6}\psfrag{-7}{-7}
\psfrag{-8}{-8}\psfrag{-9}{-9}\psfrag{-10}{-10}\psfrag{-11}{-11}\psfrag{-12}{-12}\psfrag{-13}{-13}\psfrag{-14}{-14}
\psfrag{1}{1}\psfrag{2}{2}\psfrag{3}{3}\psfrag{4}{4}\psfrag{5}{5}\psfrag{6}{6}\psfrag{7}{7}
\psfrag{8}{8}\psfrag{9}{9}\psfrag{10}{10}\psfrag{11}{11}\psfrag{12}{12}\psfrag{13}{13}\psfrag{14}{14}\psfrag{bullet}{$\bullet$}\psfrag{dot}{$\cdot$}
\includegraphics[width=1\textwidth]{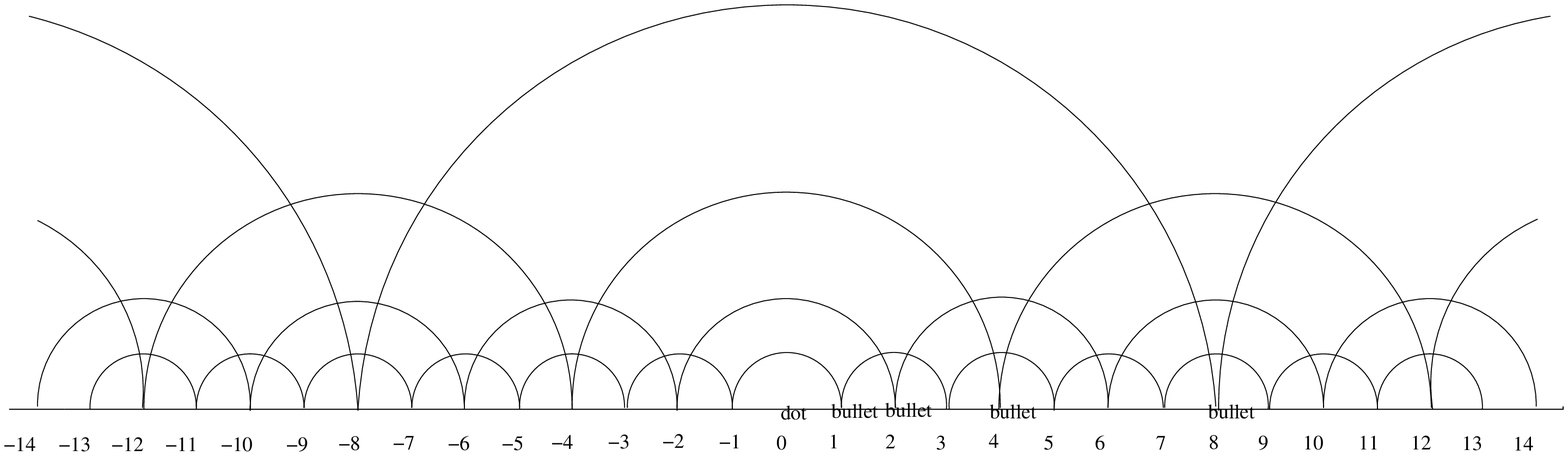} \caption{The graph $X_{0^{\infty}}$, with
$a_n^{0^{\infty}}=-2^{n-1}$.}
\end{center}
\end{figure}

\begin{figure}[h]
\begin{center}
\psfrag{0}{0}\psfrag{-1}{-1}\psfrag{-2}{-2}\psfrag{-3}{-3}\psfrag{-4}{-4}\psfrag{-5}{-5}\psfrag{-6}{-6}\psfrag{-7}{-7}
\psfrag{-8}{-8}\psfrag{-9}{-9}\psfrag{-10}{-10}\psfrag{-11}{-11}\psfrag{-12}{-12}\psfrag{-13}{-13}\psfrag{-14}{-14}
\psfrag{1}{1}\psfrag{2}{2}\psfrag{3}{3}\psfrag{4}{4}\psfrag{5}{5}\psfrag{6}{6}\psfrag{7}{7}\psfrag{bullet}{$\bullet$}
\psfrag{8}{8}\psfrag{9}{9}\psfrag{10}{10}\psfrag{11}{11}\psfrag{12}{12}\psfrag{13}{13}\psfrag{14}{14}
\includegraphics[width=1\textwidth]{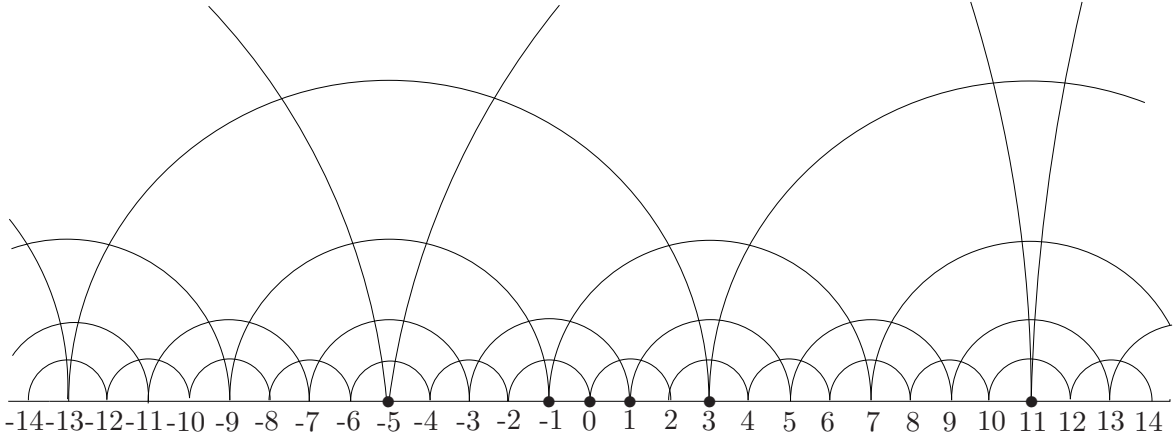} \caption{The graph $X_{(10)^{\infty}}$, with
$a_n^{(10)^{\infty}}=\frac{-1+(-2)^{n-1}}{3}$.}
\end{center}
\end{figure}


Let us consider the isomorphism problem for the family of graphs
$X_{\omega}$, with $\omega\in\{0,1\}^{\infty}$.

\begin{lemma}\label{lemma_JustOneLetter}
Let $\omega,\omega'\in \{0,1\}^{\infty}$ and suppose that $\omega$ and $\omega'$ differ only at the $n$-th letter, namely
\begin{align*}
\omega &= x_1x_2\ldots x _{n-1}x_nx_{n+1}\ldots\ldots\\
\omega'&= x_1x_2\ldots x _{n-1}\overline{x}_nx_{n+1}\ldots\ldots
\end{align*}
Then the map $\Phi_n:V_{\omega}\longrightarrow V_{\omega'}$
defined by the rule
\begin{eqnarray}\label{eq_map_Phi_oneX}
\Phi_n(z) = z - (\overline{x}_n-x_n)2^{n-1},
\end{eqnarray}
for all $z\in V_\omega = \mathbb{Z}$, is a graph isomorphism.
\end{lemma}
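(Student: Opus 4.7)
The plan is to show that $\Phi_n$, which is simply the translation $z \mapsto z - d$ with $d = (\overline{x}_n-x_n)2^{n-1}$, sends every edge family $E^k_\omega$ bijectively to $E^k_{\omega'}$. Since $\Phi_n$ is a bijection of $\mathbb{Z}=V_\omega=V_{\omega'}$, this will suffice. The argument reduces to a bookkeeping computation with the numbers $a_k^\omega$ given by \eqref{eq_defi_an}, split into three ranges according to whether $k<n$, $k=n$, or $k>n$.

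First I would record the following comparison, which is immediate from \eqref{eq_defi_an} and the fact that $\omega$ and $\omega'$ agree outside position $n$:
\begin{equation*}
a_k^{\omega'} - a_k^\omega \;=\;
\begin{cases}
0, & \text{if } 1\leq k<n,\\
(\overline{x}_n-x_n)\,2^{n-1}=d, & \text{if } k\geq n.
\end{cases}
\end{equation*}
Note that for $k\geq n$ the shift is exactly the translation parameter $d$ appearing in $\Phi_n$.

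Next I would check edge preservation. For $0\leq k<n$, a generic edge $(2^kz-a_k^\omega,\,2^k(z+1)-a_k^\omega)\in E^k_\omega$ is sent by $\Phi_n$ to $(2^kz-a_k^\omega-d,\,2^k(z+1)-a_k^\omega-d)$. Since $a_k^{\omega'}=a_k^\omega$ and $d/2^k=\pm 2^{n-1-k}\in\mathbb{Z}$, this equals $(2^kz'-a_k^{\omega'},\,2^k(z'+1)-a_k^{\omega'})\in E^k_{\omega'}$ with $z'=z-d/2^k$. For $k\geq n$, one has $a_k^{\omega'}=a_k^\omega+d$, so $\Phi_n(2^kz-a_k^\omega,\,2^k(z+1)-a_k^\omega)=(2^kz-a_k^{\omega'},\,2^k(z+1)-a_k^{\omega'})\in E^k_{\omega'}$ directly. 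In each case the correspondence is bijective, so $\Phi_n$ sends $E_\omega=\coprod_k E^k_\omega$ onto $E_{\omega'}$.

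There is no real obstacle here; the only point requiring a little care is the possible loop. If $X_\omega$ carries a loop at some vertex $v_0$ (necessarily of the form $-\sum_{i=1}^m x_i 2^{i-1}$ or $1-\sum_{i=1}^m(x_i-1)2^{i-1}$, occurring only when $\omega$ is eventually $0^\infty$ or $1^\infty$), I would verify using the same shift computation that $\Phi_n(v_0)$ is exactly the vertex where $X_{\omega'}$ has its loop; since the eventual tail of $\omega'$ is the same as that of $\omega$, the loop indeed transports correctly. Combining this with the edge bijection above yields that $\Phi_n$ is a graph isomorphism.
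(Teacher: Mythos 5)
Your proposal is correct and follows essentially the same route as the paper: the paper's proof consists precisely of the observation that $a_i^{\omega'}=a_i^{\omega}$ for $i<n$ and $a_i^{\omega'}=a_i^{\omega}+(\overline{x}_n-x_n)2^{n-1}$ for $i\geq n$, from which the translation $\Phi_n$ preserves adjacency. You merely spell out the resulting edge-family computation (including the divisibility of the shift by $2^k$ for $k<n$ and the transport of the possible loop), which the paper leaves implicit.
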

\begin{proof}
It suffices to observe that, under our assumptions,
$$
a_i^{\omega'}=a_i^{\omega} \qquad \mbox{for each } 1\leq i \leq n-1
$$
and
$$
a_i^{\omega'}=a_i^{\omega} + (\overline{x}_n-x_n)2^{n-1} \qquad
\mbox{for each } i \geq n,
$$
so that $\Phi_n$ preserves the adjacency relation between
vertices.
\end{proof}

\begin{lemma}\label{lemma_Total_AntiCofinality}
Let $\omega= x_1x_2\ldots\in \{0,1\}^{\infty}$ and put
$\overline{\omega} = \overline{x}_1\overline{x}_2\ldots$. Then the
map $\Psi:V_{\omega}\longrightarrow V_{\overline{\omega}}$ defined
by the rule
\[
\Psi(z) = -z +1,
\]
for all $z\in V_\omega = \mathbb{Z}$, is a graph isomorphism.
\end{lemma}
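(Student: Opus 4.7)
The plan is to mimic the proof of Lemma \ref{lemma_JustOneLetter}: reduce everything to an explicit identity relating $a_n^{\overline{\omega}}$ to $a_n^{\omega}$, and then verify edge-by-edge that $\Psi$ sends $E^n_{\omega}$ onto $E^n_{\overline{\omega}}$ for every $n\geq 0$. Since $\Psi(z)=-z+1$ is manifestly an involutive bijection of $\mathbb{Z}$, once the edge sets match up the conclusion follows.

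The first step is the arithmetic computation. Using the defining formula \eqref{eq_defi_an} and $\overline{x}_i = 1-x_i$, I expect
\[
a_n^{\overline{\omega}} \;=\; \sum_{i=1}^n(1-x_i)2^{i-1}-2^{n-1} \;=\; (2^n-1)-\sum_{i=1}^n x_i 2^{i-1}-2^{n-1} \;=\; -a_n^{\omega}-1.
\]
This is the key identity; everything else is a reshuffling.

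Next I would verify compatibility level by level. For $n=0$, the edge $\{z,z+1\}\in E^0_{\omega}$ is mapped by $\Psi$ to $\{-z+1,-z\}$, which is again of the form $\{w,w+1\}$ and hence lies in $E^0_{\overline{\omega}}$. For $n\geq 1$, a generic edge in $E^n_{\omega}$ has the form $\{2^n z-a_n^{\omega},\,2^n(z+1)-a_n^{\omega}\}$; applying $\Psi$ and using $-a_n^{\omega}+1=a_n^{\overline{\omega}}+2$ (equivalently, the identity above) I get
\[
\bigl\{-2^n z-a_n^{\overline{\omega}},\;-2^n(z+1)-a_n^{\overline{\omega}}\bigr\},
\]
and the substitution $w=-z-1$ recognises this pair as $\{2^n(w+1)-a_n^{\overline{\omega}},\,2^n w-a_n^{\overline{\omega}}\}\in E^n_{\overline{\omega}}$. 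Running $z$ over $\mathbb{Z}$ gives a bijection from $E^n_{\omega}$ onto $E^n_{\overline{\omega}}$, so $\Psi$ preserves adjacency at every level.

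I do not expect any serious obstacle: the only delicate point is bookkeeping the sign and the additive constant $+1$ coming from the reflection through $1/2$, and making sure that $-a_n^{\omega}-1$ really is $a_n^{\overline{\omega}}$. Once the identity is in hand, the rest is essentially the same routine verification as in Lemma \ref{lemma_JustOneLetter}. A minor sanity check worth mentioning is the boundary case where $\omega$ is eventually constant, so that $X_\omega$ carries a loop: the described loop sits at $-\sum x_i 2^{i-1}$ when $\omega=x_1\cdots x_n 0^{\infty}$ and at $1-\sum(x_i-1)2^{i-1}$ when $\overline{\omega}=\overline{x}_1\cdots\overline{x}_n 1^{\infty}$, and one checks that $\Psi$ exchanges these two vertices, so loops are also preserved.
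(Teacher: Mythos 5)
Your proof is correct and takes essentially the same route as the paper: the whole argument rests on the identity $a_n^{\overline{\omega}}=-a_n^{\omega}-1$ (stated in the paper as $a_n^{\omega}+a_n^{\overline{\omega}}=-1$), after which one checks that $\Psi$ carries $E^n_{\omega}$ onto $E^n_{\overline{\omega}}$ for every $n\geq 0$. You merely spell out the edge-by-edge verification (and the loop case) that the paper leaves implicit, so there is nothing to fix.
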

\begin{proof}
Since $x_i + \overline{x}_i=1$ for each $i\geq 1$, it follows from \eqref{eq_defi_an} that
$$
a_i^{\omega}+ a_i^{\overline{\omega}} = 1+2+2^2 +\cdots +2^{i-2}-2^{i-1}=-1
$$
for each $i\geq 1$ and so $\Psi$ preserves the adjacency relation
between vertices.
\end{proof}

Two sequences $\omega = x_1x_2\ldots$ and $\omega' = y_1y_2\ldots$
over the alphabet $\{0,1\}$ are said to be \textit{cofinal} (resp.
\textit{anticofinal}) if there exists $i_0$ such that $x_i=y_i$
(resp. $x_i=\overline{y}_i$), for every $i\geq i_0$. It is clear
that cofinality is an equivalence relation. The corresponding
equivalence classes are called the cofinality classes and we
denote by $Cof(\omega)$ the cofinality class of
$\omega\in\{0,1\}^{\infty}$.

\begin{thm}\label{thm_IsomProblem}
Two graphs $X_{\omega}$ and $X_{\omega'}$, with
$\omega,\omega'\in\{0,1\}^{\infty}$, are isomorphic if and only if
the sequences $\omega$ and $\omega'$ are either cofinal or
anticofinal.
\end{thm}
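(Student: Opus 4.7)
The ``if'' direction is an immediate consequence of the two preceding lemmas. If $\omega,\omega'$ are cofinal, say $x_i=y_i$ for all $i\geq i_0$, apply Lemma~\ref{lemma_JustOneLetter} in turn at each of the finitely many positions $i<i_0$ where the two sequences differ, and compose the resulting isomorphisms. If $\omega,\omega'$ are anticofinal, then $\overline{\omega}$ and $\omega'$ are cofinal, and combining Lemma~\ref{lemma_Total_AntiCofinality} with the cofinal case gives $X_\omega\cong X_{\overline{\omega}}\cong X_{\omega'}$.

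For the ``only if'' direction, suppose $f\colon X_\omega\to X_{\omega'}$ is a graph isomorphism. The key step is to show that $f$ must be \emph{affine} on $\mathbb{Z}$, i.e.\ of the form $f(z)=z+m$ or $f(z)=-z+m$ for some $m\in\mathbb{Z}$. This is achieved by recovering the edge set $E^0_\omega$ from the abstract graph structure. First, I would show that the edges of $H_\omega:=E^0_\omega\cup E^1_\omega$ are precisely the edges of $X_\omega$ lying in some triangle. Indeed, for any $E^0$-edge $(v,v+1)$ exactly one of $v,v+1$ is incident to an $E^1$-edge (the one whose parity matches $-a_1^\omega\bmod 2$), producing a triangle; the triangle through an $E^1$-edge $(v,v+2)$ is $(v,v+1,v+2)$; and on the other hand, for $n\geq 2$ an $E^n$-edge $(u,u+2^n)$ has both endpoints of ``level $n$'' (they are incident, besides their two $E^0$-edges, only to $E^n$-edges), so the candidate common neighbors lie in $\{u\pm 1,\,u-2^n\}\cap\{u+2^n\pm 1,\,u+2^{n+1}\}$, which a case check shows is empty for $n\geq 2$. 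Thus $f$ sends $H_\omega$ onto $H_{\omega'}$.

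Next, inside $H_\omega$ a vertex has degree $4$ precisely when it is incident to $E^1_\omega$ (i.e.\ it is a level-$1$ vertex), and degree $2$ otherwise. Consequently $E^1_\omega$ is intrinsically characterised as the set of edges of $H_\omega$ whose \emph{both} endpoints have degree $4$ in $H_\omega$, while $E^0_\omega$ consists of the remaining edges of $H_\omega$ (those with exactly one such endpoint). Hence $f(E^0_\omega)=E^0_{\omega'}$, and since $E^0_\omega$ is the canonical integer-line graph on $\mathbb{Z}$, $f$ restricted to this subgraph is an order-preserving or order-reversing bijection of $\mathbb{Z}$, so $f(z)=z+m$ or $f(z)=-z+m$.

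Finally, the passage back to sequences is a direct computation with \eqref{eq_defi_an}. If $f(z)=z+m$, then preservation of $E^n_\omega$ forces $a_n^\omega-a_n^{\omega'}\equiv m\pmod{2^n}$ for every $n\geq 1$; combined with the bound $|a_n^\omega-a_n^{\omega'}|<2^n$ and the recurrence $a_{n+1}^\omega-a_n^\omega\in\{\pm 2^{n-1}\}$ (determined by $x_{n+1}$), this forces $a_n^\omega-a_n^{\omega'}=m$ for all large $n$, whence $x_n=y_n$ for all large $n$, i.e.\ $\omega$ and $\omega'$ are cofinal. The case $f(z)=-z+m$ reduces to the shift case by post-composing with the flip of Lemma~\ref{lemma_Total_AntiCofinality}, yielding anticofinality. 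The main obstacle is the triangle-based identification of $E^0_\omega$ inside $X_\omega$; once this is in hand, the rest is routine arithmetic with the formula for $a_n^\omega$.
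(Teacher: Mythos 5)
Your ``if'' direction and your reduction of an arbitrary isomorphism to an affine map $z\mapsto \pm z+m$ are correct and essentially the paper's own route (your triangle characterisation of $E^0_\omega\cup E^1_\omega$ is a more detailed version of the paper's remark that the edge set $E^0_\omega$ is intrinsically recognisable). The gap is in the final arithmetic step. From $f(z)=z+m$ you obtain only the congruence $a_n^{\omega}-a_n^{\omega'}\equiv m \pmod{2^n}$ together with $|a_n^{\omega}-a_n^{\omega'}|<2^n$, which for $2^{n-1}>|m|$ leaves \emph{two} admissible values of $d_n=a_n^{\omega}-a_n^{\omega'}$, namely $m$ and the other representative of $m$ modulo $2^n$ in $(-2^n,2^n)$. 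The recurrence $d_{n+1}-d_n=2^n(x_{n+1}-y_{n+1})$ does show that once $d_n=m$ the equality persists, but it does not exclude the other branch persisting for all $n$: having $d_n=m-2^n$ for all large $n$ forces $x_{n+1}=0$ and $y_{n+1}=1$ for all large $n$, i.e.\ $\omega$ ends in $0^\infty$ and $\omega'$ in $1^\infty$. This case genuinely occurs: $z\mapsto z+1$ is a translation isomorphism from $X_{0^\infty}$ onto $X_{1^\infty}$ (it matches the residue classes $2^{n-1}\bmod 2^n$ and $2^{n-1}+1\bmod 2^n$ and carries the loop at $0$ to the loop at $1$), while $0^\infty$ and $1^\infty$ are anticofinal but not cofinal. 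Hence your claim that a shift isomorphism forces cofinality is false as stated, and your proof does not cover these pairs (the theorem's conclusion still holds for them, but your argument does not deliver it).

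The paper avoids this by a preliminary reduction which your proposal omits: if $\omega$ is cofinal or anticofinal with $0^\infty$, then $X_\omega$ has a loop, hence so does $X_{\omega'}$, so $\omega'$ is also cofinal or anticofinal with $0^\infty$ and the two sequences are cofinal or anticofinal with each other; one may then assume both sequences contain infinitely many $0$'s and $1$'s. Under that assumption the bad branch cannot persist, and your congruence-plus-recurrence argument does yield cofinality (the paper instead pins down the exact equality $-a_n^{\omega'}=-a_n^{\omega}+m$ for all large $n$ via the minimality property \eqref{eq_a_n_minimon} and the set $I_t$, which is where the assumption of infinitely many $0$'s and $1$'s is used). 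Alternatively, you could keep your argument verbatim and add the observation that the persistent branch forces opposite constant tails, hence anticofinality, which is still within the theorem's conclusion; but one of these two fixes must be made explicitly--as written, the step is a genuine gap.
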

\begin{proof}
Let $\omega=x_1x_2\ldots$ and $\omega'=y_1y_2\ldots$ with
$x_i,y_i\in \{0,1\}$. First, consider the case when the sequences
$\omega$ and $\omega'$ are cofinal. Then $\omega$ and $\omega'$
differ only at finitely many indices, say $i_1,i_2, \ldots, i_k$,
i.e., $y_{i_j}= \overline{x}_{i_j}$ for $j=1,\ldots,k$, and $y_l =
x_l$ for $l\not\in \{i_1, \ldots, i_k\}$. Then, the composition
$\Phi = \Phi_{i_k}\circ \cdots \circ \Phi_{i_1}$ of the
isomorphisms defined in \eqref{eq_map_Phi_oneX} yields an
isomorphism between the graphs $X_{\omega}$ and $X_{\omega'}$. If
$\omega$ and $\omega'$ are anticofinal, then $\overline{\omega}$
and $\omega'$ are cofinal, and hence the graphs
$X_{\overline{\omega}}$ and $X_{\omega'}$ are isomorphic. Then the
graphs $X_{\omega}$ and $X_{\omega'}$ are isomorphic by
Lemma~\ref{lemma_Total_AntiCofinality}.


For the converse, suppose that the graphs $X_{\omega}$ and
$X_{\omega'}$ are isomorphic. If the sequence $\omega$ is cofinal
or anticofinal with $0^{\infty}$, then there exists a vertex with
a loop in the graph $X_{\omega}$. Hence the graph $X_{\omega'}$
must contain a vertex with a loop and thus $\omega'$ is cofinal or
anticofinal with $0^{\infty}$. So, in the sequel, we assume that
both $\omega$ and $\omega'$ have infinitely many $0$'s and $1$'s.

The following remark is crucial. The edges in the set $E^0_\omega$
have a unique property, namely that for two consecutive edges
$(z,z+1)$ and $(z+1,z+2)$, the vertices $z$ and $z+2$ are adjacent
in the graph $X_{\omega}$ (by an edge in $E_\omega^1$) if either
$z$ is even and $\omega=1x_2x_3\ldots$, or if $z$ is odd and
$\omega=0x_2x_3\ldots$. This property does not hold for the edges
in $E^i_\omega$ with $i\geq 1$, because the graph is $4$-regular
and each vertex is incident to exactly two edges in $E^0_\omega$
and to only other two edges in $E^i_\omega$. Then it is easy to
see that the edge set $E^0_\omega$ must be preserved under graph
isomorphisms. This implies that a graph isomorphism
$\varphi:V_{\omega}\longrightarrow V_{\omega'}$ is either of the
form $z\mapsto z+t$ or of the form $z\mapsto -z+t$, for a fixed
$t\in \mathbb{Z}$.

Suppose that $\varphi:V_{\omega}\longrightarrow V_{\omega'}$ is an
isomorphism of the form $\varphi(z)=z+t$, for all
$z\in\mathbb{Z}$. We will show that the sequence $\omega'$ can be
eventually recovered by using the property (\ref{eq_a_n_minimon}).
This property implies that the vertex
$\varphi(-a_n^{\omega})=-a_n^{\omega}+t$ of the graph
$X_{\omega'}$ is the closest one to the vertex $\varphi(0) = t$
that is incident to an edge of $E_{\omega'}^n$. We set
$$
I_t=\{n\in \mathbb{N} : 1-2^{n-1}\leq -a_n^{\omega} + t \leq 2^{n-1}\}.
$$
We claim that the set $I_t$ satisfies the following properties:
\begin{enumerate}
\item the set $I_t$ is nonempty;
\item if $n\in I_t$, then  $n+k\in I_t$ for all $k\geq 1$.
\end{enumerate}

\indent{\it Proof of 1.} Case $t>0$. There exists a unique integer $n_1 \geq 1$ such that $2^{n_1-1}\leq  t< 2^{n_1}$. We
choose the smallest $i\geq 1$ such that $x_{n_1+i}=1$ (the index $i$ is well defined since $\omega$ contains infinitely
many $1$'s). With this choice, one has
$$
a_{n_1+i+1}^{\omega}\leq 1+2+\cdots +2^{n_1+i-1}=2^{n_1+i}-1
$$
and so $-a_{n_1+i+1}^{\omega} \geq 1-2^{n_1+i}$. Then a fortiori $-a_{n_1+i+1}^{\omega}+t \geq -2^{n_1+i}+1$, because $t$
is positive. On the other hand, we also have $a_{n_1+i+1}^{\omega} \geq 2^{n_1+i-1}-2^{n_1+i}$ by construction and so
$-a_{n_1+i+1}^{\omega}\leq 2^{n_1+i}-2^{n_1+i-1}$. Hence,
$$
-a_{n_1+i+1}^{\omega}+t\leq 2^{n_1+i}-2^{n_1+i-1}+2^{n_1}\leq 2^{n_1+i},
$$
because $-2^{n_1+i-1}+2^{n_1}\leq 0$. This implies that $n_1+i+1\in I_t$.

\indent Case $t<0$. There exists a unique integer $n_1\geq 1$ such that $-2^{n_1}< t\leq -2^{n_1-1}$. We choose the
smallest $i\geq 1$ such that $x_{n_1+i}=0$ and $x_{n_1+i+1}=1$ (the index $i$ is well defined since $\omega$ has
infinitely many 0's and 1's). With this choice, one has
$$
a_{n_1+i+1}^{\omega}\leq 1+2+\cdots + 2^{n_1+i-2}=2^{n_1+i-1}-1
$$
and so $-a_{n_1+i+1}^{\omega} \geq -2^{n_1+i-1}+1$. This implies
$$
-a_{n_1+i+1}^{\omega}+t \geq -2^{n_1+i-1}+1-2^{n_1}\geq -2^{n_1+i}+1,
$$
because $-2^{n_1}\geq -2^{n_1+i-1}$. Moreover, we also have $a_{n_1+i+1}^{\omega} \geq -2^{n_1+i}$ by construction and so
$-a_{n_1+i+1}^{\omega}\leq 2^{n_1+i}$. Then a fortiori $-a_{n_1+i+1}^{\omega}+t \leq 2^{n_1+i}-1$, because $t \leq -1$.
Hence, $n_1+i+1\in I_t$.\\

\indent{\it Proof of 2.} Suppose that $n\in I_t$, i.e. $1-2^{n-1}\leq -a_n^{\omega}+t\leq 2^{n-1}$. It follows from
\eqref{eq_defi_an} that
\begin{eqnarray}\label{eq_inThIso_differencean}
a_{n+k}^{\omega}-a_n^{\omega} = 2^{n-1}+x_{n+1}2^n+\cdots +x_{n+k-1}2^{n+k-2}-\overline{x}_{n+k}2^{n+k-1}
\end{eqnarray}
and so
$$
a_{n+k}^{\omega}-a_n^{\omega} \leq 2^{n-1}+2^n+\cdots+2^{n+k-2} = 2^{n-1}(2^k-1) = 2^{n+k-1}-2^{n-1}.
$$
This gives
$$
-a_{n+k}^{\omega}+t\geq -a_n^{\omega}-2^{n+k-1}+2^{n-1}+t\geq -2^{n+k-1}+1.
$$
On the other hand, it follows from \eqref{eq_inThIso_differencean} that $a_{n+k}^{\omega}-a_n^{\omega}\geq
2^{n-1}-2^{n+k-1}$ and so
$$
-a_{n+k}^{\omega}+t\leq -a_n^{\omega}+t-2^{n-1}+2^{n+k-1}\leq 2^{n+k-1}.
$$
The properties $1$ and $2$ are proved.

Let $n_0 = n_0(t)$ be the least element of the set $I_t$. Then it
follows from the definition of $I_t$ and the property $2$ that
\begin{equation}\label{eq_inThIso_anEquality}
-a_n^{\omega'}= -a_n^{\omega}+t \qquad \mbox{for all }n\geq n_0.
\end{equation}
In other words, there exists $n_0$ depending on the translation
$t$ such that the closest integer to $\varphi(0)$ which is
incident to an edge of $E^n_{\omega'}$ coincides with the closest
integer to 0 which is incident to an edge of $E^n_{\omega'}$, for
all $n\geq n_0$. We will rewrite \eqref{eq_defi_an} in the form
\[
a_n^{\omega} = a_{n-1}^{\omega}+2^{n-2}(2x_n-1),\qquad\qquad a_n^{\omega'} = a_{n-1}^{\omega'}+2^{n-2}(2y_n-1).
\]
Then \eqref{eq_inThIso_anEquality} implies
\begin{eqnarray*}
0=-a_{n}^{\omega}+t+a_{n}^{\omega'} &=&
(-a_{n-1}^{\omega}+t+a_{n-1}^{\omega'})+2^{n-1}(y_{n}-x_{n})\\
&=&  2^{n-1}(y_{n}-x_{n}),
\end{eqnarray*}
for all $n>n_0$. Hence $x_n=y_n$ for all $n>n_0$, and therefore
the sequences $\omega$ and $\omega'$ are cofinal.

Suppose now that $\varphi:V_{\omega}\longrightarrow V_{\omega'}$
is an isomorphism of the form $\varphi(z)=-z+t$. Consider the
isomorphism $\Psi: V_{\omega'}\longrightarrow
V_{\overline{\omega'}}$ defined in
Lemma~\ref{lemma_Total_AntiCofinality}. Then the composition
$$
\Psi \circ \varphi : V_{\omega}\longrightarrow
V_{\overline{\omega'}}
$$
is an isomorphism of the form $z\mapsto z+t$ between the graphs $X_{\omega}$ and $X_{\overline{\omega'}}$. Then it
follows from the first part of the proof that $\omega$ and $\overline{\omega'}$ are cofinal, and this implies that
$\omega$ and $\omega'$ are anticofinal.
\end{proof}


\begin{cor}\label{cor_AutomorphisGroup}
If $\omega$ is either cofinal or anticofinal with $0^{\infty}$ then $Aut(X_\omega) \cong \mathbb{Z}/2\mathbb{Z}$,
otherwise the  automorphism group $Aut(X_{\omega})$ is trivial.
\end{cor}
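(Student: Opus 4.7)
The plan is to extract what we need from the proof of Theorem~\ref{thm_IsomProblem} and then reduce to a single explicit case. An automorphism of $X_\omega$ is a graph isomorphism $X_\omega \to X_\omega$, and the main structural output of the proof of Theorem~\ref{thm_IsomProblem} is that every such isomorphism is either a translation $z \mapsto z + t$ or a reflection $z \mapsto -z + t$ for some $t \in \mathbb{Z}$. I would quote this and split according to the two cases of the statement.

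Suppose first that $\omega$ is neither cofinal nor anticofinal with $0^\infty$. I would rule out both kinds of nontrivial automorphism. A translation $z \mapsto z + t$ that is a self-isomorphism of $X_\omega$ produces, by the $I_t$-argument inside the proof of Theorem~\ref{thm_IsomProblem} applied with $\omega' = \omega$, the identity $-a_n^\omega + t = -a_n^\omega$ for all sufficiently large $n$, forcing $t = 0$. A reflection $z \mapsto -z + t$, when post-composed with the isomorphism $\Psi$ of Lemma~\ref{lemma_Total_AntiCofinality}, becomes a translation $X_\omega \to X_{\overline{\omega}}$; Theorem~\ref{thm_IsomProblem} would then force $\omega$ and $\overline{\omega}$ to be cofinal, which is impossible since they disagree in every position. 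Hence $\Aut(X_\omega)$ is trivial.

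In the remaining case, $\omega$ is cofinal or anticofinal with $0^\infty$, so by Theorem~\ref{thm_IsomProblem} we have $X_\omega \cong X_{0^\infty}$, and it suffices to compute $\Aut(X_{0^\infty})$. I would first exhibit the involution $z \mapsto -z$ as an automorphism by direct inspection: it preserves $E^0$, and using $a_n^{0^\infty} = -2^{n-1}$ one checks that it maps each edge $(2^n z + 2^{n-1},\, 2^n(z+1) + 2^{n-1})$ of $E^n_{0^\infty}$ to another edge of the same set, while fixing the loop at $0$. Conversely, any automorphism must fix the unique loop-vertex $0$; being of the form $z \mapsto \pm z + t$ with the image of $0$ equal to $0$, it must have $t = 0$, leaving only the identity and $z \mapsto -z$, so $\Aut(X_\omega) \cong \mathbb{Z}/2\mathbb{Z}$.

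The main delicate point is the appeal, in both cases, to the structural fact that every automorphism is of the form $z \mapsto \pm z + t$: the proof of Theorem~\ref{thm_IsomProblem} establishes this only after assuming the graphs carry no loops. I would argue that the distinguishing ``unique property'' of $E^0$-edges through second-neighbour adjacency remains valid at every vertex except possibly the single loop-vertex, and an automorphism must still respect this eventual combinatorial pattern, so the same conclusion on the form of automorphisms persists in the presence of a single loop.
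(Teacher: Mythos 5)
Your proposal is correct and follows essentially the same route as the paper: it uses the structural fact from the proof of Theorem~\ref{thm_IsomProblem} that isomorphisms have the form $z\mapsto \pm z+t$, applies \eqref{eq_inThIso_anEquality} with $\omega'=\omega$ to force $t=0$ in the aperiodic case, and in the other case reduces via Theorem~\ref{thm_IsomProblem} to $X_{0^\infty}$, where the loop at $0$ is fixed and preservation of $E^0$ leaves only the identity and $z\mapsto -z$. Your extra touches (explicitly excluding reflections via $\Psi$, checking that $z\mapsto -z$ really is an automorphism of $X_{0^\infty}$, and noting that the $E^0$-recognition argument survives the single loop) only make explicit what the paper leaves implicit.
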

\begin{proof}
Suppose first that the sequence $\omega=x_1x_2\ldots\in
\{0,1\}^{\infty}$ contains infinitely many $0$'s and $1$'s. Then
an automorphism $\varphi$ of the graph $X_{\omega}$ is necessarily
of the form $\varphi(z)=z+t$. It follows from
\eqref{eq_inThIso_anEquality}, with $\omega'=\omega$, that $t=0$
and therefore $\varphi=id_{X_\omega}$. Hence the group
$Aut(X_\omega)$ is trivial.

Now suppose that $\omega$ contains either finitely many $0$'s or $1$'s. In both cases, the sequence $\omega$ is either
cofinal or anticofinal with the infinite word $0^\infty$. Hence the graph $X_\omega$ is isomorphic to the graph
$X_{0^\infty}$ and so we are only left to show that $Aut(X_{0^\infty})\cong \mathbb{Z}/2\mathbb{Z}$. Take an automorphism
$\varphi\in Aut(X_{0^\infty})$. Since the vertex $0$ is the unique vertex with a loop, we get $\varphi(0)=0$. As in the
proof of Theorem~\ref{thm_IsomProblem}, one can show that $\varphi$ preserves the edge set $E_\omega^0$. It follows that
$\varphi(1)$ equals either $1$ or $-1$ and so $\varphi$ is either the identity map or the inversion $z\mapsto -z$. Hence
$Aut(X_{\omega}) \cong Aut(X_{0^\infty}) \cong \mathbb{Z}/2\mathbb{Z}$.
\end{proof}

Recall that we denoted by $X^n_{\omega}$ the graph with the vertex
set $\mathbb{Z}$ and the edge set $\coprod_{k=0}^nE^k_{\omega}$.
The following proposition solves the isomorphism problem for these
graphs.

\begin{prop}\label{prop_X^n_Iso}
The graphs $X^n_{\omega}$ and $X^n_{\omega'}$ are isomorphic for
all $\omega,\omega'\in \{0,1\}^{\infty}$ and $n\geq 1$. The
automorphism group $Aut(X^n_{\omega})$ is isomorphic to the
infinite dihedral group $D_{\infty}$ for every $n\geq 1$.
\end{prop}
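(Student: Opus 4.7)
The plan is to prove the two assertions in turn.

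\emph{First assertion.} I would argue that the translation $\Phi_j$ of Lemma~\ref{lemma_JustOneLetter} in fact sends the edge set $E^i_\omega$ bijectively onto $E^i_{\omega'}$ for \emph{every} $i \geq 0$---the computation in the proof of that lemma establishes this level by level, before taking the union. Hence $\Phi_j$ restricts to an isomorphism $X^n_\omega \to X^n_{\omega'}$ whenever $\omega, \omega'$ differ only in the $j$-th letter. For arbitrary $\omega, \omega'$, one composes such translations over the positions $j \in \{1, \ldots, n\}$ where they disagree; this produces a sequence whose first $n$ letters match those of $\omega'$, and since $E^0, \ldots, E^n$ depend only on $x_1, \ldots, x_n$ (through $a^\omega_1, \ldots, a^\omega_n$), the two finite-level graphs then coincide literally.

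\emph{Second assertion, lower bound.} By the first assertion we may take $\omega = 0^\infty$, so $a_k^{0^\infty} = -2^{k-1}$ and $E^k_{0^\infty} = \{(2^k z + 2^{k-1},\, 2^k z + 3 \cdot 2^{k-1}) : z \in \mathbb{Z}\}$ for $1 \leq k \leq n$. A direct check shows that the translation $\sigma : z \mapsto z + 2^n$ and the reflection $\tau : z \mapsto -z$ preserve every such $E^k$; since $\tau \sigma \tau = \sigma^{-1}$, they generate a subgroup $\langle \sigma, \tau \rangle \cong D_\infty$ of $\Aut(X^n_{0^\infty})$.

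\emph{Second assertion, upper bound.} I expect this to be the main obstacle. My plan is to adapt the ``crucial remark'' used in the proof of Theorem~\ref{thm_IsomProblem} and classify the triangles of $X^n_{0^\infty}$ directly. Since every edge has length a power of $2$ in $\{1, 2, \ldots, 2^n\}$ and $2^i + 2^j$ is a power of $2$ only when $i = j$, any triangle must consist of two edges at some common level $k$ together with a third at level $k+1$. For $k \geq 1$, such a triangle is ruled out because the conditions defining $E^k_{0^\infty}$ and $E^{k+1}_{0^\infty}$ are incompatible: a vertex lying on an edge of $E^k_{0^\infty}$ has $2$-adic valuation exactly $k-1$, while one lying on an edge of $E^{k+1}_{0^\infty}$ has valuation $k$. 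Hence every triangle has the form $(z, z+1, z+2)$ with $z$ odd, consisting of two $E^0$-edges together with the single $E^1$-edge $(z, z+2)$. The even integers are then characterized as the vertices lying in a unique triangle, and at each even $v$ the two triangle-edges incident to $v$ are precisely its two $E^0$-edges; since every $E^0$-edge has exactly one even endpoint, any automorphism $\varphi$ must satisfy $\varphi(E^0) = E^0$.

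Once $E^0$ is preserved, $\varphi$ is an automorphism of the path graph $(\mathbb{Z}, E^0)$, and so $\varphi(z) = \varepsilon z + c$ with $\varepsilon \in \{\pm 1\}$ and $c \in \mathbb{Z}$. Substituting into the formula for $E^k_{0^\infty}$, the requirement $\varphi(E^k) = E^k$ for each $1 \leq k \leq n$ translates into $c \equiv 0 \pmod{2^k}$, hence $c \in 2^n \mathbb{Z}$. Therefore $\varphi \in \langle \sigma, \tau \rangle$, completing the identification $\Aut(X^n_\omega) \cong D_\infty$.
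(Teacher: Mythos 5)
Your proof is correct, and its skeleton matches the paper's: translations coming from Lemma~\ref{lemma_JustOneLetter} settle the first assertion (the paper writes the composite translation $z\mapsto z+\sum_i (x_i-y_i)2^{i-1}$ directly, using that $X^n_\omega$ depends only on the first $n$ letters), and for the second assertion both arguments exhibit $z\mapsto z+2^n$ and $z\mapsto -z$ as automorphisms and then reduce the upper bound to showing that every automorphism preserves $E^0_\omega$. Where you genuinely diverge is in how that reduction is justified and concluded. The paper simply says ``as in the proof of Theorem~\ref{thm_IsomProblem}'' — but the argument there leans on $4$-regularity (each vertex meets exactly two $E^0$-edges and two higher-level edges), which fails for $X^n_\omega$, where vertices congruent to $0$ modulo $2^n$ (after normalizing to $0^\infty$) have degree $2$. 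Your triangle classification — powers of two can only sum as $2^k+2^k=2^{k+1}$, valuation considerations kill the case $k\geq 1$, so all triangles are $\{2m-1,2m,2m+1\}$ and even vertices are exactly those in a unique triangle — is a self-contained replacement that does not need regularity, so it patches that gap cleanly. You also finish differently: the paper notes $\Aut(X^n_{0^\infty})\leq \Aut(\mathbb{Z})\cong D_\infty$ and contains the copy of $D_\infty$ generated by $\alpha,\beta$, hence is infinite dihedral, whereas you compute the congruence constraint $c\equiv 0\pmod{2^k}$ for all $k\leq n$ and identify $\Aut(X^n_{0^\infty})$ exactly as $\langle\sigma,\tau\rangle$, which is sharper (equality, not just isomorphism type). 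One tiny point worth making explicit: after you know $\varphi(z)=\varepsilon z+c$, the fact that $\varphi$ preserves each level $E^k$ separately (not just their union) follows because $\varphi$ preserves the distance $|u-v|$ and the levels have distinct edge lengths $2^k$.
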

\begin{proof}
As we mentioned when constructing these graphs, the graph
$X_{\omega}^n$ only depends on the first $n$ letters of the
sequence $\omega$. Let $\omega$ and $\omega'$ start with letters
$x_1x_2\ldots x_n$ and $y_1y_2\ldots y_n$, respectively. Then the
translation map $\Phi: V(X_\omega^n) \to V(X_{\omega'}^n)$ defined
by
\[
\Phi(z)=z+(x_1-y_1)+(x_2-y_2)2+\cdots+(x_n-y_n)2^{n-1},
\]
for all $z\in V(X_\omega^n)= \mathbb{Z}$, yields a graph
isomorphism between $X^n_{\omega}$ and $X^n_{\omega'}$ (see
Lemma~\ref{lemma_JustOneLetter}).

As for the corresponding automorphism groups, notice that the
translation $\alpha: z\mapsto z+2^n$ and the inversion $\beta:
z\mapsto -z$ are automorphisms of the graph $X^n_{0^{\infty}}$. As
in the proof of Theorem~\ref{thm_IsomProblem}, one can show that
every automorphism preserves the edge set $E_\omega^0$,  and hence
$Aut(X^n_{0^{\infty}})$ is a subgroup of $Aut(\mathbb{Z})\cong
D_{\infty}$. Since $\alpha$ and $\beta$ generate $D_\infty$, we
deduce that $Aut(X^n_{0^{\infty}})$ is also isomorphic to the
group $D_{\infty}$.
\end{proof}

\section{Dense holonomy and the local isomorphism problem for the graphs $X_{\omega}$}

In this section we consider the local structure of the graphs
$X_{\omega}$, with $\omega\in \{0,1\}^\infty$. We construct a
sequence of finite $4$-regular graphs $X_n = (V_n, E_n)$, for
$n\geq 1$, which will be used to approximate the graphs
$X_{\omega}$. For each $n$, the vertices of the graph $X_n$ are
the residues $\mathbb{Z}/2^n\mathbb{Z}=\{0,1,\ldots,2^n-1\}$
modulo $2^n$, and the edges are $(z \bmod 2^n, z+1 \bmod 2^n)$ and
$$
(2^kz+2^{k-1}\bmod 2^n, 2^k(z+1)+2^{k-1} \bmod 2^n),
$$
for every $z\in\mathbb{Z}$ and $k\geq 1$. Every graph $X_n$ is
$4$-regular with two loops at the vertices $0$ and $2^{n-1}$. The
graphs $X_1$, $X_2$, and $X_3$ are shown in
Figure~\ref{finitegraphsXn}.

\begin{figure}[h]
\begin{center}
\psfrag{0}{$0$}\psfrag{1}{$1$}
\psfrag{00}{$0$}\psfrag{11}{$1$}\psfrag{22}{$2$}\psfrag{33}{$3$}\psfrag{bullet}{$\bullet$}
\psfrag{2}{$2$}\psfrag{3}{$3$}\psfrag{4}{$4$}\psfrag{5}{$5$}\psfrag{6}{$6$}\psfrag{7}{$7$}
\includegraphics[width=0.9\textwidth]{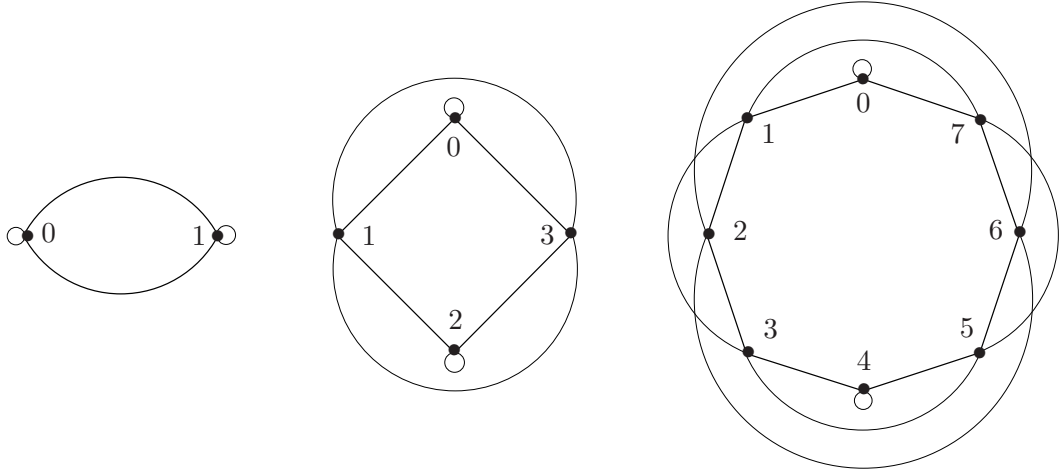} \caption{The graphs $X_1,X_2$ and $X_3$.}\label{finitegraphsXn}
\end{center}
\end{figure}

Take a sequence $\omega\in\{0,1\}^{\infty}$ and consider the graph
$X_{\omega}$. There is a natural sequence of quotients
$X_{\omega}\bmod{2^n}$ of the graph $X_{\omega}$ when we factorize
the vertex set $\mathbb{Z}$ modulo $2^n$. The vertices of the
graph $X_{\omega}\bmod{2^n}$ are the residues modulo $2^n$ and two
vertices $z_1+2^n\mathbb{Z}$ and $z_2+2^n\mathbb{Z}$ are adjacent
if the integers $z_1+2^nt_1$ and $z_2+2^nt_2$ are adjacent in the
graph $X_{\omega}$, for some $t_1,t_2\in\mathbb{Z}$. For example,
the graph $X_{(10)^{\infty}}\bmod{8}$ is shown in
Figure~\ref{10mod8}.

\begin{figure}[h]
\begin{center}
\psfrag{00}{$0$}\psfrag{11}{$1$}\psfrag{22}{$2$}\psfrag{33}{$3$}\psfrag{44}{$4$}\psfrag{55}{$5$}
\psfrag{66}{$6$}\psfrag{77}{$7$}\psfrag{bullet}{$\bullet$}
\includegraphics[width=0.4\textwidth]{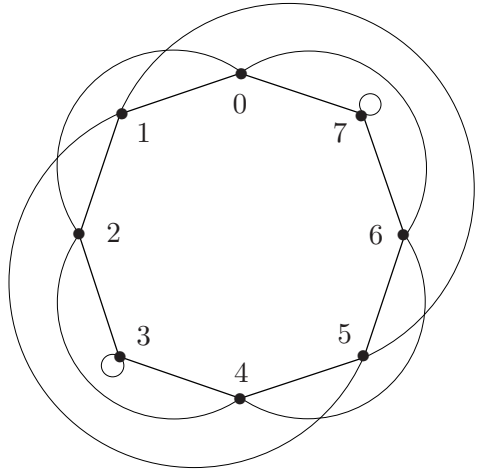} \caption{The graph
$X_{(10)^{\infty}}\bmod{8}$.}\label{10mod8}
\end{center}
\end{figure}

\begin{prop}\label{prop_QuotGraph_Xn}
For every $\omega \in \{0,1\}^\infty$, the quotient graph
$X_{\omega} \bmod 2^n$ is isomorphic to the graph $X_n$.
\end{prop}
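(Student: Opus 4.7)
The plan is to produce an explicit isomorphism in the form of a translation modulo $2^n$. Set $c_n = \sum_{i=1}^n x_i 2^{i-1} \in \mathbb{Z}$ (the integer whose binary expansion is $x_1x_2\cdots x_n$) and define
\[
\phi : \mathbb{Z}/2^n\mathbb{Z} \to \mathbb{Z}/2^n\mathbb{Z}, \qquad \phi(z) = z + c_n \bmod 2^n.
\]
Since both graphs share the vertex set $\mathbb{Z}/2^n\mathbb{Z}$ and $\phi$ is clearly bijective, the task reduces to verifying that $\phi$ identifies the edge sets.

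I would check edges level by level. The level-$0$ edges $(z,z{+}1) \bmod 2^n$ are preserved by any translation, so they match the level-$0$ edges of $X_n$. For a level-$k$ edge of $X_{\omega}\bmod 2^n$ with $1 \le k \le n-1$, that is
\[
(2^k z - a_k^\omega,\; 2^k(z{+}1) - a_k^\omega) \bmod 2^n,
\]
its image under $\phi$ has the required form $(2^k z' + 2^{k-1}, 2^k(z'{+}1) + 2^{k-1}) \bmod 2^n$ of a level-$k$ edge of $X_n$ provided that $c_n - a_k^\omega \equiv 2^{k-1} \pmod{2^k}$. This is an immediate consequence of~\eqref{eq_defi_an}: a direct expansion yields
\[
c_n - a_k^\omega = 2^{k-1} + \sum_{i=k+1}^n x_i 2^{i-1},
\]
and the sum is divisible by $2^k$.

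The final step is to account for the levels $k \ge n$ of $X_\omega$. For such $k$ one has $2^k \equiv 0 \pmod{2^n}$, so every edge in $E_\omega^k$ collapses in the quotient to a loop at the residue $-a_k^\omega \bmod 2^n$. A short calculation from \eqref{eq_defi_an} gives
\[
-a_n^\omega \equiv 2^{n-1} - c_n \pmod{2^n}, \qquad -a_k^\omega \equiv -c_n \pmod{2^n} \text{ for all } k \ge n+1,
\]
so that $\phi$ sends these loops to loops at $2^{n-1}$ and $0$, respectively, matching the two loops of $X_n$. After these checks the map $\phi$ is a graph isomorphism $X_\omega \bmod 2^n \to X_n$.

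The main (mild) obstacle is bookkeeping around the loops: one must verify that the infinitely many levels $k \ge n+1$ all project to a single loop at the same residue in the quotient, so that $X_\omega \bmod 2^n$ is actually a $4$-regular graph with exactly two loops and truly matches the description of $X_n$. The second displayed congruence above is precisely what makes this work, after which everything else is a routine level-by-level verification.
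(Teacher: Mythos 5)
Your proposal is correct and follows essentially the same route as the paper: the paper's proof uses exactly the translation $\Phi(z)=z+x_1+x_2 2+\cdots+x_n2^{n-1} \bmod 2^n$, delegating the verification to the earlier translation arguments (Proposition \ref{prop_X^n_Iso} and Lemma \ref{lemma_JustOneLetter}), whereas you simply carry out that level-by-level check, including the collapse of the levels $k\geq n$ to the two loops, explicitly. No substantive difference.
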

\begin{proof}
Let $\omega$ start with the letters $x_1x_2\ldots x_n$, with
$x_i\in \{0,1\}$. Then, as in the proof of
Proposition~\ref{prop_X^n_Iso}, we have that the map $\Phi:
V(X_\omega \bmod{2^n})\to V_n$ defined by
\[
\Phi(z)=z+x_1+x_22+\cdots+x_n2^{n-1} \bmod{2^n},
\]
for all $z\in V(X_\omega \bmod{2^n}) = \mathbb{Z}/2^n\mathbb{Z}$,
yields a graph isomorphism between the graphs $X_{\omega} \bmod
2^n$ and $X_n$.
\end{proof}

Moreover, the graphs $X_{\omega}$ can be recovered as limits of
suitable sequence of pointed graphs $(X_n,v_n)$ in the local
topology. To be more precise, recall that a sequence of pointed
graphs $(Y_n,v_n)$ \textit{converges} to a pointed graph $(Y,v)$
in the \textit{Gromov-Hausdorff metric} if, for every ball
$B_{Y}(v,r)$ in the graph $Y$ with center $v$ and radius $r$,
there exists an isomorphism between $B_{Y}(v,r)$ and
$B_{Y_n}(v_n,r)$ that maps $v$ to $v_n$, for all $n$ large enough.

\begin{thm}
For every sequence $\omega=x_1x_2\ldots\in\{0,1\}^{\infty}$, one
has the convergence
\begin{eqnarray}\label{federico}
(X_{\omega},0)=\lim_{n\rightarrow\infty} (X_n,x_1+x_22+\cdots
+x_n2^{n-1}).
\end{eqnarray}
Moreover, every limit of a convergent sequence of pointed graphs
$(X_n,v_n)$ is isomorphic to a graph $X_{\omega}$, for some
$\omega\in \{0,1\}^{\infty}$.
\end{thm}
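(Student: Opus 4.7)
The plan is to combine Proposition~\ref{prop_QuotGraph_Xn}, which identifies $X_n$ with the quotient $X_\omega\bmod 2^n$ via an isomorphism sending $0\bmod 2^n$ to $v_n=x_1+x_22+\cdots+x_n 2^{n-1}$, with the claim that for every $r\geq 0$ and every sufficiently large $n$ the canonical quotient map $q_n\colon X_\omega\to X_\omega\bmod 2^n$ restricts to a pointed graph isomorphism $B_{X_\omega}(0,r)\xrightarrow{\sim}B_{X_\omega\bmod 2^n}([0],r)$. Granting this claim, the convergence~\eqref{federico} is immediate. To prove it, fix $r$, set $B:=B_{X_\omega}(0,r)\subset\mathbb{Z}$; $B$ is finite because $X_\omega$ is $4$-regular, so its $\mathbb{Z}$-diameter $D_r$ and the quantity $N_r:=\max\{n(v):v\in B,\ v\text{ not the loop vertex}\}$, where $n(v)$ is the unique $n\geq 1$ with $v\equiv-a^\omega_n\pmod{2^n}$, are both finite. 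I then take $n$ satisfying $2^{n-1}>D_r$ and $n>N_r$.

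Injectivity of $q_n|_B$ is immediate from $D_r<2^n$, and edge preservation is automatic. What remains is (i)~the surjectivity $q_n(B)=B_{X_\omega\bmod 2^n}([0],r)$, and (ii)~the absence of any \emph{spurious} edges or loops in the induced subgraph on $q_n(B)$. For~(i) the inclusion $\subseteq$ is obvious, and for the reverse I would lift any loop-free path of length $\leq r$ starting at $[0]$ in the quotient to a path of the same length starting at $0$ in $X_\omega$: every edge of $E^k_\omega$ with $k<n$ is invariant under translation by $2^n\mathbb{Z}$, so each quotient edge admits a lift starting at any prescribed lift of its source; loops can be avoided since they do not extend distances. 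For~(ii), a putative spurious non-loop edge between $q_n(b_i)$ and $q_n(b_j)$, $b_i\neq b_j$ in $B$, would force $|b_j-b_i+2^n\tau|=2^k$ for some $k<n$ and $\tau\neq 0$, which is impossible once $2^{n-1}>D_r$; spurious loops at $q_n(v)$ with $v\in B$ require $v\equiv -v_n$ or $v\equiv -a^\omega_n=2^{n-1}-v_n\pmod{2^n}$, and the $2$-adic convergences $v_n\to\omega^\ast\in\mathbb{Z}_2$ (with $\omega^\ast:=\sum_{i\geq 1}x_i 2^{i-1}$) and $-a^\omega_n\to-\omega^\ast$ show that, for $n$ large, these congruences fail on every $v\in B$ different from the loop vertex of $X_\omega$ (which is $-\omega^\ast\in\mathbb{Z}$ precisely when $\omega$ is cofinal or anticofinal to $0^\infty$, as already observed in Section~\ref{section2}).

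For the second assertion, suppose $(X_n,v_n)\to(Y,y)$ in the Gromov-Hausdorff metric and write $v_n=x^{(n)}_1+x^{(n)}_2 2+\cdots+x^{(n)}_n 2^{n-1}$ with $x^{(n)}_i\in\{0,1\}$. By compactness of $\{0,1\}^\infty$ and a standard diagonal extraction, I pass to a subsequence $(n_k)$ along which, for each fixed $i$, the bit $x^{(n_k)}_i$ eventually stabilizes to some $x_i\in\{0,1\}$; set $\omega=x_1x_2\ldots$. The decisive observation is that $B_{X_\omega}(0,r)$, and in particular its maximal level $N_r$, depends only on $x_1,\ldots,x_{N_r}$, because the edge sets $E^0_\omega,\ldots,E^{N_r}_\omega$ are determined by $a^\omega_1,\ldots,a^\omega_{N_r}$, which in turn depend only on $x_1,\ldots,x_{N_r}$. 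Once $k$ is large enough that $x^{(n_k)}_i=x_i$ for $1\leq i\leq N_r$ and $n_k$ exceeds the threshold from the first part (which is uniform in $k$, as it depends only on the fixed ball $B_{X_\omega}(0,r)$), Proposition~\ref{prop_QuotGraph_Xn} combined with the first part yields $B_{X_{n_k}}(v_{n_k},r)\cong B_{X_\omega}(0,r)$. Hence $(X_{n_k},v_{n_k})\to(X_\omega,0)$, and by uniqueness of the Gromov-Hausdorff limit $(Y,y)\cong(X_\omega,0)$.

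The main obstacle I foresee is step~(ii) of the first part, specifically the control of spurious loops. The quotient $X_\omega\bmod 2^n$ always carries two loops---at the residues $-v_n$ and $-a^\omega_n\equiv 2^{n-1}-v_n$ modulo $2^n$---which record the infinitely many level-$k$ edges with $k\geq n$ collapsing under the quotient, and one must prove that, for large $n$, neither residue meets $B_{X_\omega}(0,r)$ except possibly at the genuine loop vertex of $X_\omega$. This is exactly the point where the $2$-adic nature of the construction is used, and it is what forces the loop structure of $X_\omega$ (trivial or a single loop, according to whether $\omega$ is cofinal/anticofinal to $0^\infty$) to match the loop residues of the approximating quotients.
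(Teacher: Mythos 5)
Your treatment of the first statement is correct and is essentially the paper's argument carried out in more detail: the paper also reduces \eqref{federico} to Proposition~\ref{prop_QuotGraph_Xn} together with the fact that reduction mod $2^n$ is injective and edge-faithful on a large piece of $X_\omega$ (it works with the interval subgraphs $I_m$ rather than with balls), and your explicit control of path-lifting and of the two loop classes $-v_n$ and $2^{n-1}-v_n$ of the quotient via the $2$-adic convergence $v_n\to\omega^\ast$ fills in exactly the points the paper leaves to the reader.

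The gap is in the second statement, at your ``decisive observation'' that $B_{X_\omega}(0,r)$ depends only on $x_1,\ldots,x_{N_r}$. This is true when $\omega$ contains infinitely many $0$'s and $1$'s (then every vertex of the ball has a level, and all these levels are $\leq N_r$), but it is false precisely when the ball contains the loop vertex, i.e.\ when $\omega$ is cofinal or anticofinal with $0^\infty$ --- note that your definition of $N_r$ quietly excludes that vertex from the maximum. Concretely, for $\omega=0^\infty$ and $r=1$ one has $N_1=1$ and $B_{X_{0^\infty}}(0,1)=\{-1,0,1\}$ with a loop at $0$, while for $\omega'=01\ldots$ (same first letter) one has $a_2^{\omega'}=0$, so in $X_{\omega'}$ the vertex $0$ is joined to $\pm 4$ and the two balls are not isomorphic. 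The same phenomenon defeats the step ``$B_{X_{n_k}}(v_{n_k},r)\cong B_{X_\omega}(0,r)$ once the first $N_r$ bits agree'': take $v_n=2^{n-2}$, whose binary digits converge to $0^\infty$; in $X_n$ the vertex $2^{n-2}$ carries no loop but a double edge to $3\cdot 2^{n-2}$ (the collapsed level-$(n-1)$ class, forced by the $4$-regularity of $X_n$), so $B_{X_n}(2^{n-2},r)$ is never isomorphic to $B_{X_{0^\infty}}(0,r)$, although all prescribed initial digits match and $n$ is as large as you please; the threshold from your first part is not uniform here, because it is really a threshold for the word spelled by \emph{all} the digits of $v_{n_k}$. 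So your argument proves the second statement only when the extracted $\omega$ has infinitely many $0$'s and $1$'s, and the eventually-constant case needs a genuinely separate treatment: digitwise convergence of the basepoints alone does not suffice, since $(X_n,2^{n-2})$ does converge in the sense of the paper and its limit has a double edge at the basepoint, hence is not isomorphic to any $X_\omega$. In fairness, the paper's own proof of the second statement (``the sequence converges to $(X_\omega,0)$ by the first statement'' after the diagonal extraction) glosses over exactly the same point, since the first statement is proved only for the exact truncations $x_1+x_22+\cdots+x_n2^{n-1}$ and not for arbitrary basepoints sharing an initial segment of digits; but in your write-up the missing case is concentrated in a stated auxiliary claim that is false as formulated, so it must be flagged.
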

\begin{proof}
Let $I_m$ be the subgraph of the graph $X_{\omega}$ induced by the
vertices in the interval $(-2^m,2^m)$. Choose $n$ such that, if a
vertex $v$ of $I_m$ is adjacent to some vertex $u$ of
$X_{\omega}$, then their difference $|v-u|$ is less than $2^n$.
Then the natural projection $I_m\rightarrow I_m\bmod{2^n}$ is an
isomorphism. Composing this projection with the isomorphism from
the proof of Proposition~\ref{prop_QuotGraph_Xn} we get the
isomorphism between $I_m$ and the induced subgraph of $X_n$ that
maps $0$ to $x_1+x_22+\cdots +x_n2^{n-1}$. Since the subgraphs
$\{I_m\}_{m\geq 1}$ cover the graph $X_{\omega}$, we deduce
\eqref{federico}.

For the second statement, consider a convergent sequence of
pointed graphs $(X_n,v_n)$. Using the diagonal argument construct
a sequence $\omega=x_1x_2\ldots\in\{0,1\}$ such that for every $m$
the equality
\[
x_1+x_22+\cdots+x_m2^{m-1}\equiv v_n \bmod{2^m}
\]
holds for infinitely many $n$. By passing to a subsequence, if
necessary, we can assume that this equality holds for all $n$.
Then the sequence $(X_n,v_n)$ converges to the graph
$(X_{\omega},0)$ by the first statement.
\end{proof}

In particular, locally at every point the graph $X_{\omega}$ looks
like a part of some graph $X_n$. This can be used to classify the
graphs $X_{\omega}$ up to local isomorphisms. Two graphs $X$ and
$Y$ are said to be \textit{locally isomorphic} if, for every ball
in one graph, there exists an isomorphic ball in the other graph.

\begin{thm}\label{thm_LocalIsom}
Let $\omega,\omega'\in\{0,1\}^{\infty}$. The graphs $X_{\omega}$
and $X_{\omega'}$ are locally isomorphic if and only if either
both $\omega$ and $\omega'$ are cofinal or anticofinal with
$0^{\infty}$, or both $\omega$ and $\omega'$ are neither cofinal
nor anticofinal with $0^{\infty}$. In particular, the family of
graphs $X_{\omega}$, with $\omega\in\{0,1\}^{\infty}$, contains
precisely two graphs up to local isomorphisms, for example,
$X_{0^{\infty}}$ and $X_{(10)^{\infty}}$.
\end{thm}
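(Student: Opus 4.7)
I would prove the two directions separately. For the ``only if'' direction, recall from Section~\ref{section2} that $X_\omega$ contains a loop precisely when $\omega$ is cofinal or anticofinal with $0^\infty$. A radius-zero ball at a loop vertex cannot be isomorphic to any ball in a loopless graph, so local isomorphism forces the loop status to agree on both sides.

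For the ``if'' direction, if both $\omega$ and $\omega'$ are cofinal or anticofinal with $0^\infty$, Theorem~\ref{thm_IsomProblem} yields $X_\omega\cong X_{\omega'}$, which is trivially a local isomorphism. In the remaining case both $X_\omega$ and $X_{\omega'}$ are loopless, and by symmetry it suffices to realize every ball $B=B_{X_\omega}(u,r)$ as a ball of $X_{\omega'}$.

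My plan is to exhibit an explicit integer shift $s$ so that $v\mapsto v+s$ is a graph isomorphism from $B_{X_\omega}(u,r)$ onto $B_{X_{\omega'}}(u+s,r)$. For each vertex $v\in B$ let $n(v)$ denote its \emph{level}, the unique positive integer satisfying $v\equiv -a_{n(v)}^\omega\bmod 2^{n(v)}$, and set $K=\max\{n(v):v\in B\}<\infty$. Take $s$ to be any integer with $s\equiv a_K^\omega-a_K^{\omega'}\bmod 2^K$. Using the defining formula for $a_n^\omega$ one verifies the identity $a_{k_2}^\sigma-a_{k_1}^\sigma\equiv 2^{k_1-1}\bmod 2^{k_1}$ for $k_1<k_2$, which is independent of the underlying sequence $\sigma$; this forces the single congruence on $s$ to imply automatically $s\equiv a_k^\omega-a_k^{\omega'}\bmod 2^k$ for every $1\leq k\leq K$.

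It remains to verify that $v\mapsto v+s$ preserves levels on $B$. For $v\in B$ of level $k\leq K$, the congruence at index $k$ gives $v+s\equiv -a_k^{\omega'}\bmod 2^k$, placing the $X_{\omega'}$-level of $v+s$ at least at $k$; the congruences at indices $k+1,\ldots,K$ together with the failure $v\not\equiv -a_j^\omega\bmod 2^j$ for $k<j\leq K$ (since $n(v)=k$) rule out any $X_{\omega'}$-level in $\{k+1,\ldots,K\}$; and the same sequence-independent identity shows that vertices of level $>K$ in any $X_\sigma$ all sit in the single residue class $-a_K^\sigma-2^{K-1}\bmod 2^K$, so $v\in B$ (being of level $\leq K$ in $X_\omega$) lies outside that class for $\sigma=\omega$, whence $v+s$ lies outside the analogous class for $\sigma=\omega'$, ruling out level $>K$. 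Hence the $X_{\omega'}$-level of $v+s$ is exactly $n(v)$, so the shift carries the four neighbors $v\pm 1, v\pm 2^{n(v)}$ of $v$ in $X_\omega$ to the four neighbors of $v+s$ in $X_{\omega'}$, giving the required isomorphism. The main obstacle I expect is precisely this last level-$>K$ verification, which relies on the structural observation about residue classes of high-level vertices; once that is in place the argument is essentially formal, and the final claim that $X_{0^\infty}$ and $X_{(10)^\infty}$ represent the two local-isomorphism classes follows because $(10)^\infty$ lies in the second class.
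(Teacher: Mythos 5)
Your proposal is correct. Both directions match the paper's strategy in outline (loops obstruct local isomorphism in the mixed case; Theorem~\ref{thm_IsomProblem} handles the case when both sequences are cofinal or anticofinal with $0^{\infty}$; and in the remaining case the local isomorphism is realized by an integer translation), and in fact your shift $s\equiv a_K^{\omega}-a_K^{\omega'}\bmod 2^K$ is, up to the choice of representative, the same translation $z_0=\sum_{i\le K}(x_i-y_i)2^{i-1}$ used in the paper. What differs is the verification: the paper restricts the isomorphism $X^n_{\omega}\to X^n_{\omega'}$ of Proposition~\ref{prop_X^n_Iso} to the interval $(-2^{n-1},2^{n-1})$, observing that no edge of $E^k_{\omega}$ with $k>n$ can join two vertices of that interval, so the induced subgraphs of $X_{\omega}$ and of $X^n_{\omega}$ there coincide and eventually contain any given ball; you instead keep track of the ``level'' $n(v)$ of each ball vertex and check by congruences mod $2^k$ that the shift preserves it, which then preserves the full neighborhood $\{v\pm1,v\pm2^{n(v)}\}$ of every ball vertex and hence carries $B_{X_{\omega}}(u,r)$ onto $B_{X_{\omega'}}(u+s,r)$. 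Your congruence identity $a_{k_2}^{\sigma}-a_{k_1}^{\sigma}\equiv 2^{k_1-1}\bmod 2^{k_1}$ is correct and does what you want; note, however, that it already shows the residue classes $2^{n}\mathbb{Z}-a_n^{\omega'}$, $n\ge1$, are pairwise disjoint, so once $v+s\equiv -a_k^{\omega'}\bmod 2^k$ the level of $v+s$ is exactly $k$, and your separate arguments ruling out levels in $\{k+1,\dots,K\}$ and levels $>K$ (the single residue class $-a_K^{\sigma}-2^{K-1}\bmod 2^K$) are redundant, though not wrong. The trade-off: the paper's interval argument is shorter because it reuses Proposition~\ref{prop_X^n_Iso}, while your argument is self-contained and makes explicit exactly which congruence on the shift is needed for a given ball.
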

\begin{proof}
If $\omega$ and $\omega'$ are cofinal or anticofinal with
$0^{\infty}$, then the graphs $X_{\omega}$ and $X_{\omega'}$ are
even isomorphic by Theorem~\ref{thm_IsomProblem}.

If $\omega$ contains both infinitely many $0$'s and $1$'s, while
$\omega'$ is either cofinal or anticofinal with $0^{\infty}$, then
the graph $X_{\omega'}$ contains a vertex with a loop in contrast
to the graph $X_{\omega}$. Hence, these graphs are not locally
isomorphic.

We are left to consider the case when both $\omega$ and $\omega'$
contain infinitely many $0$'s and $1$'s. Let $\omega=x_1x_2\ldots
$ and $\omega'=y_1y_2\ldots$. Consider the induced subgraph $I_n$
of the graph $X_{\omega}$, whose vertices are the integers in the
interval $(-2^{n-1},2^{n-1})$. Then the map $\Phi$ from the proof
of Proposition \ref{prop_X^n_Iso} is an isomorphism between $I_n$
and the induced subgraph of $X_{\omega'}$, whose vertices are the
integers in the interval $(z_0-2^{n-1},z_0+2^{n-1})$ with
$z_0=(x_1-y_1)+(x_2-y_2)2+\cdots+(x_n-y_n)2^{n-1}$. Since the
subgraphs $\{I_n\}_{n\geq 1}$ eventually cover any ball, the
graphs $X_{\omega}$ and $X_{\omega'}$ are locally isomorphic.
\end{proof}

Let $X$ be a graph of uniformly bounded valence, and consider all
balls $B(v,r)$ of radius $r$ in $X$. We consider the isomorphism
relation on the balls as on pointed graphs, where $B(v,r)$ and
$B(u,r)$ are isomorphic if there exists a graph isomorphism
$B(v,r)\rightarrow B(u,r)$ mapping $v$ to $u$. Note that, since
$X$ has uniformly bounded valence, the set $T_X(r)$ of isomorphism
classes of pointed $r$-balls is finite. We then define the
\textit{$r$-type} of a vertex $v$ of $X$ as the element
$\alpha(v,r) \in T_X(r)$ representing the ball $B(v,r)$. Following
Gromov~\cite{gromov}, we say that the graph $X$ has \emph{dense
holonomy} if for any radius $r$ there exists $R=R(r)$ such that
every $R$-ball in $X$ contains vertices of each $r$-type.
Equivalently, for any radius $r$ there exists $R=R(r)$ such that
for every $r$-type $\alpha\in T_X(r)$ the balls $B(v,R)$ at the
vertices $v$ of type $\alpha(v,r)=\alpha$ cover the whole of the
graph $X$.

\begin{thm}\label{thmholonomy}
The graph $X_{\omega}$, with $\omega\in \{0,1\}^\infty$, has dense
holonomy if and only if the sequence $\omega$ has both infinitely
many $0$'s and $1$'s.
\end{thm}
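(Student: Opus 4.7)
The plan is to treat the two directions separately, with the backward direction carrying the main content.

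For the forward direction (dense holonomy $\Rightarrow$ $\omega$ has infinitely many $0$'s and $1$'s), I would argue the contrapositive. If $\omega$ is cofinal or anticofinal with $0^{\infty}$, the observations recalled in Section~\ref{section2} show that $X_{\omega}$ possesses exactly one loop, at a unique vertex $v_0$. For every $r$, the $r$-type of $v_0$ is distinguished from every other $r$-type by the presence of a loop at the root, and no other vertex realises this $r$-type. Since $X_\omega$ is infinite, for each candidate $R$ one can find vertices $w$ whose $R$-ball misses $v_0$ entirely; such $R$-balls omit this distinguished $r$-type, contradicting dense holonomy.

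For the reverse direction, assume $\omega$ has infinitely many $0$'s and $1$'s, so every vertex is incident to exactly two edges outside $E^0_{\omega}$. The central tool is a \emph{level function} $\ell\colon \mathbb{Z}\to \{1,2,3,\ldots\}$, where $\ell(v)$ is the unique $k\geq 1$ with $v\equiv -a_k^{\omega}\pmod{2^k}$. Well-definedness comes from the identity $a_n^{\omega}\equiv a_j^{\omega}+2^{j-1}\pmod{2^j}$ for $1\leq j<n$, which follows directly from \eqref{eq_defi_an}; this identity shows that the sets $-a_k^{\omega}+2^k\mathbb{Z}$ are pairwise disjoint and, under the hypothesis on $\omega$, together exhaust $\mathbb{Z}$. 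By construction the four neighbours of $v$ in $X_{\omega}$ are $v\pm 1$ and $v\pm 2^{\ell(v)}$.

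The crux is a periodicity statement: if $\ell(u)\leq n$, then $\ell(u+2^n)=\ell(u)$, because the congruence defining $\ell$ at level $\leq n$ is modulo a divisor of $2^n$. Consequently, if every vertex of the ball $B(v,r)$ in $X_{\omega}$ has level at most $n$, the shift $u\mapsto u+2^n$ is a rooted graph isomorphism $B(v,r)\to B(v+2^n,r)$. I would prove this by induction on the radius: at each step, the current vertex has the same level as its $+2^n$-translate, so its outgoing edges transport correctly; because every vertex encountered has level $\leq n$, no edge in $E_\omega^k$ with $k>n$ ever appears in the ball, and all edges in $E_\omega^0\cup\bigcup_{k\leq n}E_\omega^k$ are preserved by the shift.

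With this in hand the conclusion is immediate. Let $\mathcal{T}_r$ denote the finite set of $r$-types of $X_{\omega}$. For each $\alpha\in \mathcal{T}_r$ pick a representative $v_{\alpha}$ and let $M_{\alpha}=\max\{\ell(u): u\in B(v_{\alpha},r)\}$, which is finite because the ball is finite. Set $n=\max_{\alpha\in\mathcal{T}_r}M_{\alpha}$ and $R=2^{n-1}$. Then every vertex of $v_{\alpha}+2^n\mathbb{Z}$ has $r$-type $\alpha$, and any $w\in\mathbb{Z}$ lies within integer distance $2^{n-1}$ of some element of $v_{\alpha}+2^n\mathbb{Z}$, which is reachable from $w$ in at most $2^{n-1}$ steps along the $E^0_{\omega}$-edges. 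The main technical obstacle is precisely the inductive step in the previous paragraph: one must verify that the exploration never leaks into the high-level regime. This is ensured by the observation that a vertex of level $\ell$ is incident to no $E_\omega^k$-edge for $k\notin\{0,\ell\}$, so once all visited vertices have level $\leq n$, no large-jump edge is ever available, and the $+2^n$-translate of the ball is again a ball of the same radius with identical structure.
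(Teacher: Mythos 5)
Your proof is correct and follows essentially the same route as the paper: the eventually constant case is excluded by the unique loop, and in the mixed case dense holonomy is deduced from the $2^n$-periodicity of $r$-types together with a radius $R$ comparable to $2^{n-1}$ reached along $E^0_{\omega}$-edges. The only difference is in presentation: the paper gets the periodicity by choosing $n$ so that the balls $B(z_i,r+1)$ lie in $(-2^{n-1},2^{n-1})$ and passing through the quotient $X_{\omega}\bmod 2^{n+1}$, while you prove translation-invariance of the balls directly via the level function; both arguments rest on the same fact, namely that only the $2^n$-periodic edge sets $E^k_{\omega}$ with $k\leq n$ are visible inside the relevant balls.
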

\begin{proof}
If $\omega$ is cofinal or anticofinal with $0^{\infty}$, then the
graph $X_{\omega}$ contains a unique vertex with loop, and hence
it cannot have dense holonomy.

Suppose that $\omega$ has infinitely many $0$'s and $1$'s. Given
$r\geq 0$, consider the finite set $T_{X_{\omega}}(r)$ and let
$z_1,z_2,\ldots,z_m$ be the representatives for the corresponding
$r$-types. Choose $n$ large enough so that every ball $B(z_i,r+1)$
is contained in the interval $(-2^{n-1},2^{n-1})$. Then the
$r$-type of each vertex $z_i$ coincides with its $r$-type as a
vertex of the graph $X_{\omega}\bmod{2^{n+1}}$. It follows that
$\alpha(z_i+k2^{n+1},r)=\alpha(z_i,r)$ for all $k\in\mathbb{Z}$,
and the property in the definition of dense holonomy holds with
$R=2^n$.
\end{proof}

Finally, we say that a graph $X$ is \emph{generic} if distinct
vertices $u,v$ of $X$ have distinct $r$-types $\alpha(u,r) \neq
\alpha(v,r)$ for some $r$. We observe that genericity is
equivalent to the triviality of the automorphism group. Indeed, if
the balls $B(u,r)$ and $B(v,r)$ are not isomorphic as pointed
graphs, then there is no automorphism $\varphi\in Aut(X)$ that
maps $v$ to $u$. Therefore, if the graph $X$ is generic, then the
automorphism group $Aut(X)$ is trivial. Conversely, suppose that
two distinct vertices $v$ and $u$ have the same $r$-types, for all
$r\geq 0$. Then, for every $r$, there exists a graph isomorphism
$\varphi_r:B(v,r)\to B(u,r)$ that maps $v$ to $u$. Note that the
restrictions $\varphi_r|_{B(v,r')}: B(v,r')\to B(u,r')$, with $r'
\leq r$, are also isomorphisms and that there are only finitely
many graph isomorphisms $B(v,r')\to B(u,r')$. By a compactness
argument, there exists an isomorphism $\varphi\in Aut(X)$ such
that $\varphi(u)=v$, and therefore $Aut(X)$ is nontrivial. Hence,
Corollary~\ref{cor_AutomorphisGroup} implies that the graph
$X_{\omega}$ for $\omega\in\{0,1\}^{\infty}$ is generic if and
only if the sequence $\omega$ has both infinitely many $0$'s and
$1$'s.

\section{The graphs $X_{\omega}$ as Schreier graphs of a self-similar group}

A faithful action of a group $G$ on the space $\{0,1\}^{\infty}$
is called \textit{self-similar} if, for every $g\in G$ and every
$x\in\{0,1\}$, there exists $h\in G$ and $y\in\{0,1\}$ such that
$$
g(x\omega)=yh(\omega),
$$
for every sequence $\omega\in\{0,1\}^{\infty}$. In this case, the
element $h$ is called the \textit{restriction} of $g$ at $x$ and
is denoted by $h=g|_x$. We also get the action of $G$ on
$\{0,1\}$, where $y$ is the image of $x$ under $g$. Then, every
element $g\in G$ can be uniquely given by the tuple
$(g|_0,g|_1)\pi$, where $\pi\in\Sym(\{0,1\})$ is the permutation
induced by $g$ on $\{0,1\}$. Inductively, we can define the action
of $G$ on the set $\{0,1\}^n$, and the restriction
$g|_{x_1x_2\ldots x_n}=(((g|_{x_1})|_{x_2})\ldots)|_{x_n}$, for
any $x_i\in\{0,1\}$.

If a group $G$ acts self-similarly on the space $\{0,1\}^{\infty}$, it can also
be regarded as an automorphism group of the rooted binary tree
$T_2$ (see Figure \ref{tree}). In fact, the $2^n$ vertices of the
$n$-th level of the tree can be identified with the words in
$\{0,1\}^n$, for each $n\geq 1$ (the root of the tree is
identified with the empty word $\emptyset$). The elements of the
boundary $\partial T_2$ of the tree can be identified with the
(right-)infinite binary words, i.e., the elements of
$\{0,1\}^\infty$. In particular, for every automorphism $g\in G$
whose self-similar representation is $g=(g|_0,g|_1)\pi$, the
permutation $\pi\in Sym(\{0,1\})$ describes the action of $g$ on
the first level of the tree, and $g|_i$ is its restriction on the
subtree rooted at the vertex $i$ of the first level, with
$i\in\{0,1\}$. More generally, $g|_{x_1\ldots x_n}$ is the
restriction of the action of $g$ to the subtree rooted at the
vertex $x_1\ldots x_n$ of the $n$-th level of $T_2$. Observe that
such a subtree is isomorphic to the whole tree $T_2$. Then, the
property of self-similarity means that these restrictions are
elements of $G$.

\begin{figure}[h]
\begin{picture}(500,250)
\put(250,240){\circle*{2}}
\put(170,180){\circle*{2}}\put(330,180){\circle*{2}}
\put(130,120){\circle*{2}}\put(210,120){\circle*{2}}\put(290,120){\circle*{2}}
\put(370,120){\circle*{2}}

\put(110,60){\circle*{2}}\put(150,60){\circle*{2}}
\put(190,60){\circle*{2}}\put(230,60){\circle*{2}}\put(270,60){\circle*{2}}
\put(310,60){\circle*{2}}\put(350,60){\circle*{2}}\put(390,60){\circle*{2}}

\put(250,240){\line(-4,-3){80}}\put(250,240){\line(4,-3){80}}
\put(170,180){\line(-2,-3){40}}\put(170,180){\line(2,-3){40}}
\put(330,180){\line(-2,-3){40}}\put(330,180){\line(2,-3){40}}

\put(104,40){\circle*{1}} \put(116,40){\circle*{1}}
\put(144,40){\circle*{1}} \put(156,40){\circle*{1}}
\put(184,40){\circle*{1}} \put(196,40){\circle*{1}}
\put(224,40){\circle*{1}} \put(236,40){\circle*{1}}
\put(264,40){\circle*{1}} \put(276,40){\circle*{1}}
\put(304,40){\circle*{1}} \put(316,40){\circle*{1}}
\put(344,40){\circle*{1}} \put(356,40){\circle*{1}}
\put(384,40){\circle*{1}} \put(396,40){\circle*{1}}

\put(104,30){\circle*{1}}\put(116,30){\circle*{1}}
\put(144,30){\circle*{1}} \put(156,30){\circle*{1}}
\put(184,30){\circle*{1}} \put(196,30){\circle*{1}}
\put(224,30){\circle*{1}} \put(236,30){\circle*{1}}
\put(264,30){\circle*{1}} \put(276,30){\circle*{1}}
\put(304,30){\circle*{1}} \put(316,30){\circle*{1}}
\put(344,30){\circle*{1}} \put(356,30){\circle*{1}}
\put(384,30){\circle*{1}}\put(396,30){\circle*{1}}

\put(104,20){\circle*{1}} \put(116,20){\circle*{1}}
\put(144,20){\circle*{1}}
\put(156,20){\circle*{1}}\put(184,20){\circle*{1}}
\put(196,20){\circle*{1}} \put(224,20){\circle*{1}}
\put(236,20){\circle*{1}} \put(264,20){\circle*{1}}
\put(276,20){\circle*{1}} \put(304,20){\circle*{1}}
\put(316,20){\circle*{1}} \put(344,20){\circle*{1}}
\put(356,20){\circle*{1}} \put(384,20){\circle*{1}}
\put(396,20){\circle*{1}}

\put(247,243){$\emptyset$} \put(159,180){$0$}\put(333,180){$1$}
\put(115,122){$00$}\put(210,122){$01$}
\put(273,122){$10$}\put(372,122){$11$}
\put(104,48){$000$}\put(139,48){$001$}
\put(179,48){$010$}\put(219,48){$011$}
\put(259,48){$100$}\put(299,48){$101$}
\put(339,48){$110$}\put(379,48){$111$}

\put(130,120){\line(-1,-3){20}}\put(130,120){\line(1,-3){20}}
\put(210,120){\line(-1,-3){20}}\put(210,120){\line(1,-3){20}}
\put(290,120){\line(-1,-3){20}}\put(290,120){\line(1,-3){20}}
\put(370,120){\line(-1,-3){20}}\put(370,120){\line(1,-3){20}}
\end{picture}
\caption{The rooted binary tree $T_2$.}\label{tree}
\end{figure}
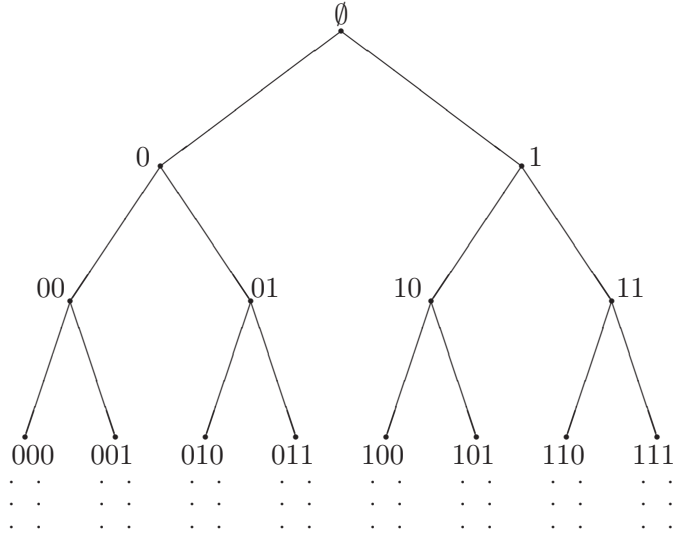

Consider the self-similar group $G$ generated by the
transformations $a$ and $b$ of the set $\{0,1\}^{\infty}$, whose
actions satisfy the following recursive rules
\[
\begin{array}{rclrcl}
a(0x_2x_3\ldots)&=&1x_2x_3\ldots,\qquad\qquad  &b(0x_2x_3\ldots)&=&0b(x_2x_3\ldots),\\
a(1x_2x_3\ldots)&=&0a(x_2x_3\ldots),\qquad\qquad
&b(1x_2x_3\ldots)&=&1a(x_2x_3\ldots),
\end{array}
\]
for all $x_i\in\{0,1\}$. Using restrictions, the generators $a$
and $b$ can be written recursively as
\[
a=(e,a)\sigma,\qquad\qquad b=(b,a),
\]
where $\sigma$ is the transposition $(0,1)$, and $e$ is the
identity transformation.

The group $G$ is the simplest example of a group generated by a
polynomial but not bounded automaton (see definition in
\cite{sidki:circ} or \cite[Chapter~IV]{PhDBondarenko}, the
generating automaton of $G$ is shown in
Figure~\ref{fig_Automaton}).

\begin{figure}[h]
\begin{center}
\psfrag{b}{$b$} \psfrag{a}{$a$} \psfrag{id}{$id$}
\psfrag{1}{$0|0$} \psfrag{2}{$1|1$} \psfrag{3}{$1|0$}
\psfrag{4}{$0|1$} \psfrag{5}{$0|0$} \psfrag{6}{$1|1$}
\epsfig{file=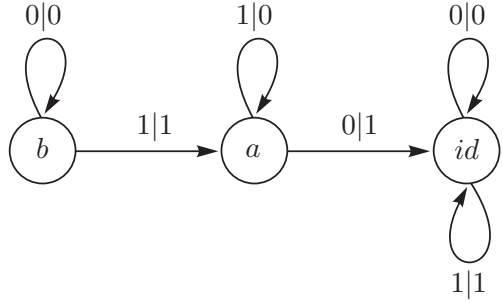,height=110pt} \caption{The
generating automaton for the group $G$.}\label{fig_Automaton}
\end{center}
\end{figure}

Moreover, all the groups generated by polynomial and not bounded
automata minimizing the sum of the number of states and the number
of letters are isomorphic to the group $G$. Indeed, all such
generating automata have $3$ states over an alphabet with $2$
letters, and one state defines the trivial automorphism. Up to
passing to inverses of generators, permuting the states of the
automaton, permuting letters of the alphabet, there are only $3$
such automata, whose states satisfy the following recursions:
\[\begin{array}{lll}
a=(e,a)\sigma\qquad &a_1=(e,a_1)\sigma\qquad &a_2=(e,a_2)\sigma\\
b=(b,a)\qquad   &b_1=(b_1,a_1)\sigma\qquad &b_2=(a_1,b_2)\sigma.
\end{array}\]
Then $a=a_1=a_2$, $b_1a=(b_1a,a)$, $a^{-1}b_2=(a^{-1}b_2,a)$. The
transformations $b$, $b_1a$, and $a^{-1}b_2$ satisfy the same
recursion and thus are all equal. Hence $\{a,b\}$, $\{a_1,b_1\}$,
and $\{a_2,b_2\}$ are just different generating sets of the group
$G$.


The action of the transformation $a=(e,a)\sigma$ on the space
$\{0,1\}^{\infty}$ corresponds to the addition of $1$ to dyadic
integers $\mathbb{Z}_2$, when the sequence $x_1x_2x_3\ldots$ is
identified with the binary integer $x_1+x_22+x_32^2+\ldots$; for
this reason, $a$ is called the (binary) \textit{adding machine}.
Indeed, $a(x_1x_2\ldots )=y_1y_2\ldots $, with $x_i,y_i\in
\{0,1\}$, if and only if
\[
1+x_1+x_22+\ldots+x_n2^{n-1}+\ldots=y_1+y_22+\ldots+y_n2^{n-1}+\ldots.
\]
In particular, the action of $a$ is transitive on every set
$\{0,1\}^n$, and hence the group $G$ also acts transitively on
$\{0,1\}^n$, for every $n\geq 1$. This is expressed by saying that
the action of $G$ is {\it level-transitive} on the tree. In the
Appendix, it is also shown that $G$ is self-replicating
(recurrent) and regular weakly branch over its commutator
subgroup.

\begin{prop}\label{proposition_orbits}
The union of the cofinality classes $\Cof (0^{\infty})\cup \Cof
(1^{\infty})$ forms one orbit of the action of $G$ on
$\{0,1\}^{\infty}$. Any other orbit consists of precisely one
cofinality class.
\end{prop}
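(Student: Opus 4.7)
The plan is to bracket the $G$-orbits between the $\langle a\rangle$-orbits, computed explicitly, and a slightly coarser equivalence relation that $G$ is easily shown to preserve; both will turn out to coincide with the partition claimed. I identify $\{0,1\}^\infty$ with the dyadic integers $\mathbb{Z}_2$ via $x_1x_2\ldots\mapsto\sum_{i\ge 1}x_i2^{i-1}$, under which the adding machine acts by $a(\omega)=\omega+1$, so the $\langle a\rangle$-orbit of $\omega$ is the coset $\omega+\mathbb{Z}\subset\mathbb{Z}_2$. A direct analysis of carries shows that $\omega'-\omega\in\mathbb{Z}$ if and only if $\omega$ and $\omega'$ are either cofinal or both lie in $\Cof(0^\infty)\cup\Cof(1^\infty)$; in particular the coset $\mathbb{Z}\subset\mathbb{Z}_2$ itself is exactly $\Cof(0^\infty)\cup\Cof(1^\infty)$, the non-negative integers being cofinal with $0^\infty$ and the negative integers with $1^\infty$. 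Thus the $\langle a\rangle$-orbits already realize the partition of the proposition, and since $\langle a\rangle\le G$, the $G$-orbits can only be coarser.

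To show no further merging occurs, define $\omega\sim\omega'$ iff either $\omega,\omega'$ are cofinal or both belong to $\Cof(0^\infty)\cup\Cof(1^\infty)$, and verify that every element of $G$ preserves $\sim$. Since $G=\langle a,b\rangle$ and the $\sim$-invariance of $a$ is what was just established, it suffices to check $b$-invariance. Every $\omega\neq 0^\infty$ has a unique expression $\omega=0^k1\omega''$ with $k\ge 0$, and unwinding the recursion $b=(b,a)$ exactly $k+1$ times yields
\[
b(\omega)=0^k1\,a(\omega''),
\]
while $b(0^\infty)=0^\infty$ because $b(0\xi)=0\,b(\xi)$ forces the fixed point. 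Applying the $a$-case to $\omega''$: if $\omega''\notin\Cof(1^\infty)$ then $a(\omega'')$ is cofinal with $\omega''$ and hence $b(\omega)$ with $\omega$; otherwise both $\omega$ and $b(\omega)$ lie in $\Cof(0^\infty)\cup\Cof(1^\infty)$, since $0^k11^\infty$ is cofinal with $1^\infty$ and $0^k10^\infty$ with $0^\infty$.

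Putting the two halves together, every $G$-orbit is contained in a $\sim$-class while every $\sim$-class is a single $\langle a\rangle$-orbit, hence a single $G$-orbit, which is precisely the conclusion. The main (and only genuinely nontrivial) obstacle is the carry analysis identifying the $\langle a\rangle$-orbits with the claimed partition; the verification for $b$ reduces immediately to the $a$-case via the self-similar recursion, and the inverses of generators need no separate treatment because a bijection preserving a partition preserves it when inverted.
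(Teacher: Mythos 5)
Your proof is correct and follows essentially the same route as the paper's: identify the $\langle a\rangle$-orbits on $\mathbb{Z}_2$ with the claimed partition, then check that $b$ preserves it by observing (via the recursion $b=(b,a)$) that $b(0^k1\omega'')=0^k1\,a(\omega'')$, i.e.\ $b$ acts as $a$ on the tail past the first $1$ — which is exactly the paper's remark that $b|_{x_1\ldots x_{n-1}}=a$ at the first position of disagreement. Your version merely spells out the carry analysis and the reduction to generators and inverses that the paper leaves implicit.
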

\begin{proof}
The orbits of the action of the adding machine $a$ on the ring
$\mathbb{Z}_2$ of dyadic integers precisely correspond to the
cofinality classes in the statement. Hence, it is sufficient to
prove that the generator $b$ preserves these orbits. To achieve
this, notice that if $b(x_1x_2\ldots)=y_1y_2\ldots$ and we take
the first position $n$ with $x_n\neq y_n$, then $b|_{x_1x_2\ldots
x_{n-1}}=a$ and $a(x_nx_{n+1}\ldots)=y_ny_{n+1}\ldots$.
\end{proof}

Every action of a finitely generated group can be described by the
associated Schreier graph. The \textit{Schreier graph $\gr_n$} of
the action of the group $G$ on the set $\{0,1\}^n$ is the graph
with the set of vertices $\{0,1\}^n$ and there is an edge between
$v$ and $s(v)$ labeled by $s$, for every $v\in \{0,1\}^n$ and
$s\in \{a,b\}$. For a point $\omega\in\{0,1\}^{\infty}$ the
\textit{(orbital) Schreier graph $\gr_{\omega}$} of $\omega$ under
the action of $G$ is the graph, whose vertex set is the orbit
$G(\omega)$ of $\omega$ under the action of $G$, and there is an
edge between every two vertices $v$ and $s(v)$ labeled by $s$ for
every $s\in\{a,b\}$. It follows from Proposition
\ref{proposition_orbits} that the vertex set of the graph
$\gr_{0^{\infty}}$ is the union
$\Cof(0^{\infty})\cup\Cof(1^{\infty})$; every other graph
$\gr_{\omega}$ has the set of vertices $\Cof(\omega)$. All the
Schreier graphs $\gr_n$ and $\gr_{\omega}$ are connected. For
every $\omega=x_1x_2\ldots\in\{0,1\}^{\infty}$, the sequence of
the pointed Schreier graphs $(\gr_n,x_1x_2\ldots x_n)$ converges
in the local topology on pointed graphs to the pointed graph
$(\gr_{\omega},\omega)$ (see
\cite[Proposition~7.2]{fractal_gr_sets} and
\cite{gri_zuk:asympt_spect}). Schreier graphs of self-similar
actions of groups have been largely studied from the viewpoint of
spectral computations, growth, amenability, topology of Julia sets
\cite{barth, PhDBondarenko, Bondarenkoarxiv, grigor-nekra, Geneva,
grigo-sunik2, grigo-sunik1}.

It is shown in \cite{omega_periodic} that the graph
$X_{0^{\infty}}$ is isomorphic to the Schreier graph of the
infinite word $0^{\infty}$ under the action of the group $G$. We
generalize this analysis by the following statement.

\begin{thm}\label{thmschreierisom}
The map $\varphi:V(\gr_{\omega})\rightarrow V_{\omega}$ defined by
the rule
\[
\varphi(a^m(\omega))=m \quad \mbox{ for } m\in\mathbb{Z}
\]
is a graph isomorphism for every $\omega\in\{0,1\}^{\infty}$.

The map $\varphi_n:V(\gr_n)\rightarrow V_n$ defined by the rule
\[
\varphi_n(a^m(0^n))=m \quad \mbox{ for } m=0,1,\ldots, 2^n-1,
\]
is a graph isomorphism for every $n\geq 1$.
\end{thm}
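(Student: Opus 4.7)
The plan is to identify the sequence space $\xo$ with the ring $\mathbb{Z}_2$ of dyadic integers via $x_1x_2\ldots\mapsto\sum_{i\geq 1}x_i2^{i-1}$. Under this identification the generator $a=(e,a)\sigma$ coincides with the adding machine $\omega\mapsto\omega+1$, so $a^m(\omega)=\omega+m$ in $\mathbb{Z}_2$ and, by torsion-freeness, these elements are distinct for distinct $m\in\mathbb{Z}$. Proposition~\ref{proposition_orbits} guarantees that the $\langle a\rangle$-orbit $\omega+\mathbb{Z}$ equals the whole $G$-orbit of $\omega$, and therefore $\varphi$ is a bijection from $V(\gr_\omega)$ onto $V_\omega=\mathbb{Z}$. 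The $a$-edges match $E^0_\omega$ immediately, since the $a$-edge at $a^m(\omega)$ links it to $a^{m+1}(\omega)$, which becomes the pair $(m,m+1)$.

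The heart of the proof is the following lemma, which I would establish by induction on $k$ from the recursion $b(0x)=0\,b(x)$ and $b(1x)=1\,a(x)$: if $\omega'\in\xo$ has its first $1$ at position $k$, then $b(\omega')=\omega'+2^k$ in $\mathbb{Z}_2$, while $b(0^\infty)=0^\infty$. Given this, for $m\in\mathbb{Z}$ let $k=k(m)$ be the position of the first $1$ in $\omega+m$; the condition ``first $1$ at position $k$'' reads $\omega+m\equiv 2^{k-1}\pmod{2^k}$, which by the definition $a_k^\omega=\sum_{i=1}^kx_i2^{i-1}-2^{k-1}$ is equivalent to $m\equiv-a_k^\omega\pmod{2^k}$. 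Thus the $b$-edge at $m$ connects it to $m+2^k$. Subtracting $2^k$ in $\mathbb{Z}_2$ leaves the first $k$ letters untouched, so $\omega+m-2^k$ also has first $1$ at position $k$, and a second application of the lemma gives $b(\omega+m-2^k)=\omega+m$; hence the $b^{-1}$-edge at $m$ goes to $m-2^k$. Both edges therefore lie in
\[
E^k_\omega=\{(2^kz-a_k^\omega,\,2^k(z+1)-a_k^\omega):z\in\mathbb{Z}\},
\]
and $k$ is exactly the unique index for which $m$ is incident to an edge of some $E^n_\omega$ with $n\geq 1$. The degenerate case $\omega+m=0$ produces a loop at this vertex in $\gr_\omega$, matching the loop that $X_\omega$ carries precisely when $\omega$ is cofinal with $0^\infty$.

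The argument for $\varphi_n$ is formally the same, carried out in $\mathbb{Z}/2^n\mathbb{Z}$: here $a$ acts as $+1\bmod 2^n$, a single $2^n$-cycle, so $m\mapsto a^m(0^n)$ is a bijection from $\{0,1,\ldots,2^n-1\}$ to $\{0,1\}^n$, the $a$-edges yield the pairs $(m,m+1\bmod 2^n)$, and the same first-$1$-position analysis describes the $b$-edges. Loops now arise both at $0^n$ (no $1$ at all) and at $0^{n-1}1$ (since then $b$ would add $2^n\equiv 0\bmod 2^n$), matching the two loops of $X_n$ at $0$ and $2^{n-1}$. The main obstacle is the lemma computing $b(\omega')$ as a translation by $2^k$; once it is in hand, the identification of $b$-edges with $E^k_\omega$ is a routine consequence of the very definition of the coefficients $a_n^\omega$.
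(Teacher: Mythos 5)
Your proof is correct and is essentially the paper's own argument rewritten in dyadic-integer language: your key lemma that $b$ adds $2^k$ to any sequence whose first $1$ occurs at position $k$ is exactly the paper's pair of identities $b(0^{n-1}1\omega')=0^{n-1}1a(\omega')$ and $a^{2^n}(v\omega')=va(\omega')$, and your congruence $m\equiv -a_k^{\omega}\pmod{2^k}$ is the paper's computation $a^{-a_n^{\omega}}(\omega)=0^{n-1}1\omega'$, so the identification of the $b$-edges with $E^k_{\omega}$ proceeds identically. The only cosmetic slip is in a side remark: the degenerate case $\omega+m=0$ (the loop) occurs when $\omega$ is cofinal \emph{or anticofinal} with $0^{\infty}$, not only when it is cofinal, but this does not affect the argument.
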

\begin{proof}
We will only prove the first statement, the second is similar.

The map $\varphi$ is well defined and bijective because the action
of $a$ is faithful and transitive on every orbit. We need to show
that $\varphi$ preserves the adjacency relation. First, if two
vertices of the graph $\gr_{\omega}$ are joined by an edge labeled
by $a$, then the corresponding binary words differ by $\pm 1$ and
so they are mapped to vertices of the graph $X_{\omega}$ which are
connected by an edge of $E^0_\omega$. We are left to consider
edges labeled by $b$. The actions of $a$ and $b$ have the
following properties:
\[
b(0^{n-1}1\omega)=0^{n-1}1a(\omega) \quad \mbox{ and } \quad
a^{2^n}(v\omega)=va(\omega),
\]
for all $v\in \{0,1\}^n$ and $\omega\in\{0,1\}^{\infty}$. Let
$\omega=x_1x_2\ldots x_n\omega'$, with $x_i\in\{0,1\}$. Using the
correspondence with binary numbers and the definition of
$a_n^{\omega}$ we get
\[
x_1+x_22+\cdots +x_n2^{n-1}+\omega'2^n -
a_n^{\omega}=2^{n-1}+\omega'2^n \quad \Rightarrow \quad
a^{-a_n^{\omega}}(\omega)=0^{n-1}1\omega'.
\]
Hence
\[
b(a^{2^nk-a_n^{\omega}}(\omega))=b(0^{n-1}1a^{k}(\omega'))=0^{n-1}1a^{k+1}(\omega')=a^{2^n(k+1)-a_n^{\omega}}(\omega),
\]
which corresponds to the edge
$(2^nk-a_n^{\omega},2^n(k+1)-a_n^{\omega})$ of the graph
$X_{\omega}$.
\end{proof}

We can apply the results from the previous section to the graphs
$\gr_{\omega}$.

\begin{cor}
Let $\omega,\omega'\in \{0,1\}^{\infty}$. The orbital Schreier
graphs $\gr_{\omega}$ and $\gr_{\omega'}$ are isomorphic if and
only if $\omega$ and $\omega'$ are cofinal or anticofinal.
\end{cor}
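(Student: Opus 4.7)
The statement is an immediate consequence of the two main results already established. My plan is simply to chain together Theorem~\ref{thmschreierisom} and Theorem~\ref{thm_IsomProblem}.

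First I would invoke Theorem~\ref{thmschreierisom}, which provides explicit graph isomorphisms $\varphi_\omega:\gr_\omega\to X_\omega$ and $\varphi_{\omega'}:\gr_{\omega'}\to X_{\omega'}$ via $a^m(\omega)\mapsto m$ and $a^m(\omega')\mapsto m$ respectively. Consequently, any isomorphism $\gr_\omega\to\gr_{\omega'}$ can be conjugated through $\varphi_\omega$ and $\varphi_{\omega'}$ into an isomorphism $X_\omega\to X_{\omega'}$, and conversely, so that $\gr_\omega\cong\gr_{\omega'}$ if and only if $X_\omega\cong X_{\omega'}$.

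Next I would apply Theorem~\ref{thm_IsomProblem}, which characterizes exactly when the graphs $X_\omega$ and $X_{\omega'}$ are isomorphic: this happens precisely when $\omega$ and $\omega'$ are either cofinal or anticofinal. Combining the two equivalences yields the statement of the corollary.

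There is no real obstacle here, since both ingredients are already proved in the previous sections; the only mild point to mention is that the label of the edges (the $a$- vs.\ $b$-labeling in the Schreier graph) is forgotten when passing to $X_\omega$, but this is harmless for the isomorphism question, as Theorem~\ref{thmschreierisom} gives unlabeled graph isomorphisms and we are only asked about isomorphism of graphs, not of labeled graphs.
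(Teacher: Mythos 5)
Your proposal is correct and is exactly the argument the paper intends: the corollary is stated right after Theorem~\ref{thmschreierisom} as a direct consequence of the identification $\gr_\omega\cong X_\omega$ combined with the classification in Theorem~\ref{thm_IsomProblem}. Your remark that the edge labels are discarded harmlessly is a fine (and accurate) point of care, but otherwise there is nothing to add.
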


If $\omega$ is eventually constant (and so cofinal or anticofinal
with $0^{\infty}$), then $\gr_{\omega}$ and $\gr_{\omega'}$ are
isomorphic if and only if also $\omega'$ is eventually constant
and in this case $\gr_{\omega}=\gr_{\omega'}$, because $\omega$
and $\omega'$ belong to the same orbit. If $\omega$ contains
infinitely many $0$'s and infinitely many $1$'s, then
$\gr_{\omega}=\gr_{\omega'}$ if $\omega'$ is cofinal with $\omega$
(they are in the same orbit), and $\gr_{\omega}\cong\gr_{\omega'}$
if $\omega'$ is anticofinal with $\omega$ (they are in different
orbits).

\begin{cor}
The set $\{\gr_{\omega}\}_{\omega\in \{0,1\}^{\infty}}$ contains
uncountably many isomorphism classes of graphs. The class of
$\gr_{0^{\infty}}$ contains only one graph; any other class
contains two isomorphic graphs (corresponding to two different
orbits).

Moreover, all graphs $\gr_{\omega}$ except $\gr_{0^{\infty}}$ are
locally isomorphic.
\end{cor}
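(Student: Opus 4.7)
The plan is to translate everything about $\gr_\omega$ into statements about $X_\omega$ via the graph isomorphism $\gr_\omega \cong X_\omega$ of Theorem~\ref{thmschreierisom}, and then apply the three previously proved theorems: Theorem~\ref{thm_IsomProblem} (classifying isomorphism of $X_\omega$), Theorem~\ref{thm_LocalIsom} (classifying local isomorphism), and Proposition~\ref{proposition_orbits} (describing the orbits of $G$ on $\{0,1\}^\infty$).

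First I would count isomorphism classes. Combining $\gr_\omega\cong X_\omega$ with Theorem~\ref{thm_IsomProblem} shows that $\gr_\omega\cong\gr_{\omega'}$ if and only if $\omega$ and $\omega'$ are cofinal or anticofinal. A single cofinality class $\Cof(\omega)$ is countable (its elements agree with $\omega$ from some index on, and there are countably many prefixes), so each equivalence class under \emph{cofinal or anticofinal} is a union of at most two countable sets, hence countable. Since $\{0,1\}^\infty$ is uncountable, there are uncountably many such classes, giving uncountably many isomorphism classes among the $\gr_\omega$.

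Next I would determine how many graphs in each isomorphism class actually arise as $\gr_\omega$ for some $\omega$. The key observation is that $\gr_\omega$ depends only on the $G$-orbit of $\omega$, since the vertex set of $\gr_\omega$ is precisely $G(\omega)$ and the graph structure is determined by the $G$-action. By Proposition~\ref{proposition_orbits}, $\Cof(0^\infty)\cup\Cof(1^\infty)$ is a single orbit, while every other $G$-orbit is a single cofinality class $\Cof(\omega)$. Therefore all $\omega$ that are cofinal or anticofinal with $0^\infty$ give the same Schreier graph $\gr_{0^\infty}$, so this isomorphism class contains only one graph. For any other $\omega$, the class under cofinality-or-anticofinality splits as $\Cof(\omega)\sqcup\Cof(\overline\omega)$, yielding two distinct $G$-orbits and hence two distinct Schreier graphs $\gr_\omega$ and $\gr_{\overline\omega}$, which are isomorphic as abstract graphs by Theorem~\ref{thm_IsomProblem}.

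Finally, the local isomorphism assertion is immediate: applying Theorem~\ref{thm_LocalIsom} and the isomorphism $\gr_\omega\cong X_\omega$, any two graphs $\gr_\omega,\gr_{\omega'}$ with $\omega,\omega'$ both having infinitely many $0$'s and infinitely many $1$'s are locally isomorphic. The only obstacle worth flagging is being careful to distinguish \emph{abstract} graph isomorphism from \emph{equality} of Schreier graphs of $G$-orbits; once this distinction is made, the count of graphs per isomorphism class reduces to reading off the orbit picture in Proposition~\ref{proposition_orbits}, and no further calculation is required.
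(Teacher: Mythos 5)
Your proposal is correct and follows essentially the same route as the paper: identify $\gr_\omega$ with $X_\omega$ via Theorem~\ref{thmschreierisom}, use Theorem~\ref{thm_IsomProblem} for the isomorphism classes (with the easy countability count giving uncountably many), Proposition~\ref{proposition_orbits} to see that the $0^\infty$-class is one orbit while every other class splits into the two orbits $\Cof(\omega)$ and $\Cof(\overline{\omega})$, and Theorem~\ref{thm_LocalIsom} for the local isomorphism statement. No gaps.
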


Let us list at the end of this section known properties of the group $G$.
First of all, we observe that the monoid generated by $a$ and $b$
is free, and hence $G$ has exponential growth (see
\cite{classification} for the proof of these properties, where the
group $G$ appears under the number $929$).

It is proved by S.~Sidki in~\cite{sidki:nonfree} that groups generated by polynomially growing automata do not have free non-abelian subgroups, which implies that $G$ has no free non-abelian subgroups. For a shorter proof of S.~Sidki's result (for the case of locally finite trees) see \cite{nekrashevych}.

In \cite{amen:linear} it is shown that the class of
linear-activity automata groups is contained in the class of
amenable groups. In particular, the group $G$, which is discussed
there in the example following (the statement of) Theorem 1 (where
the generator $a$ (resp. $b$) corresponds to our generator $b$
(resp. $a^{-1}$)) and which is called the {\it long-range group},
is amenable. This answered a question posed by the fourth named
author in the Kourovka notebook \cite{kourovka}, Question 16.74,
and also in Guido's book of conjectures \cite{guido}, Conjecture
35.9.

Note that the amenability of $G$ provides another proof of the
fact that $G$ contains no free nonabelian subgroups.

\section{Growth of the graphs $X_{\omega}$}

Let $X$ be a connected graph of uniformly bounded valence.
The \textit{growth function} of $X$ with respect to one of its vertices $v \in V(X)$
is defined by $\gamma_v(n)=|B(v,n)|$. In order to avoid dependence
on a vertex, one introduces the equivalence relation on the growth
functions. Given two functions
$f,g:\mathbb{N}\cup\{0\}\rightarrow\mathbb{N}\cup\{0\}$ we say
that $f\prec g$ if there exists a constant $C>0$ and an integer
$n_0$ such that $f(n)\leq g(Cn)$ for all $n\geq n_0$, and then $f$
and $g$ are called equivalent $f\sim g$ if $f\prec g$ and $g\prec
f$. The equivalence class of a function is called its
\textit{growth}. Then, for any two vertices of the graph $X$, the
respective growth functions are equivalent, and one can talk about
the growth $\gamma$ of $X$. We say that the graph $X$ has
\textit{intermediate growth} if its growth $\gamma$ satisfies
$P\precnsim \gamma\precnsim E$ for any polynomial $P$ and every
exponential function $E(n)=a^n$, with $a>1$. Recall that the {\it
diameter} of a finite graph $\Gamma = (V,E)$ is defined as
$\max_{u,v\in V}d(u,v)$.

\begin{thm}\label{thm_growth_diam}
The diameters of the Schreier graphs $\gr_n$ have intermediate
growth. Indeed, there exist constants $c,d>0$ such that
\[
c\sqrt{n}\,2^{\sqrt{2n}}\leq {\rm Diam}(\gr_n)\leq
d\sqrt{n}\,2^{\sqrt{2n}}
\]
for all $n\geq 1$.
\end{thm}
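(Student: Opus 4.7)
By Theorem~\ref{thmschreierisom}, $\gr_n \cong X_n$, so it is enough to estimate $\textrm{Diam}(X_n)$. Recall that on the vertex set $\mathbb{Z}/2^n\mathbb{Z}$, the graph $X_n$ has level-$0$ edges $z \sim z+1$ together with, for each $1 \leq j \leq n$, level-$j$ edges joining $v$ to $v \pm 2^j$, incident only to those $v$ of $2$-adic valuation exactly $j-1$.

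For the \textbf{upper bound}, the plan is to set
\[
D(m) := \max\bigl\{d_{X_n}(0, v) \,:\, |v| \leq 2^{m-1}\bigr\}, \qquad 1 \leq m \leq n,
\]
and prove the recursion $D(m) \leq 2\,D(j) + 2^{m-j-1}$ for every $1 \leq j < m$ by the following explicit construction: first travel from $0$ to the nearest valuation-$(j-1)$ vertex, namely $2^{j-1}$, in at most $D(j)$ steps; then use level-$j$ edges, each shifting by $\pm 2^j$, at most $2^{m-j-1}$ times, reaching a vertex within distance $2^{j-1}$ of the target $v$; finally correct the residual displacement by another path of length $\leq D(j)$. (Local translation invariance of $X_n$ away from the loop vertices $0$ and $2^{n-1}$ justifies applying the same bound $D(j)$ to the correction step.) Iterating with the near-optimal choice $a_k := m_k - j_k \approx \sqrt{2 m_k}$ at each stage yields, after $K \approx \sqrt{2n}$ iterations, the telescoped bound $D(n) \leq 2^K + \sum_{k=0}^{K-1} 2^{k + a_k - 1}$. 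A direct calculation shows that the relation $k + a_k \approx \sqrt{2n}$ persists throughout the iteration, so each summand is of order $2^{\sqrt{2n}}$ and the sum is of order $\sqrt{n}\cdot 2^{\sqrt{2n}}$, giving $D(n) \leq d\sqrt n\, 2^{\sqrt{2n}}$.

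For the \textbf{lower bound}, fix an extremal vertex $v^* \in X_n$ (for instance one whose displacement from $0$ requires contributions at every scale) and consider any shortest path $\gamma = (0 = v_0, v_1, \ldots, v_\ell = v^*)$. Letting $c_j$ denote the number of level-$j$ edges of $\gamma$, one has $\sum_j c_j = \ell$ and the displacement equation $\sum_{j \geq 0} \pm c_j\, 2^j \equiv v^* \pmod{2^n}$. The key structural constraint is that every level-$j$ edge with $j \geq 1$ has both endpoints in the sparse valuation-$(j-1)$ sublattice (density $2^{-j}$ in $\mathbb{Z}/2^n\mathbb{Z}$), so $\gamma$ must contain sufficiently many lower-level edges to set up each level-$j$ edge it uses. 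Packaging these constraints as inequalities dual to the upper-bound recursion yields $\ell \geq c\sqrt n\, 2^{\sqrt{2n}}$.

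The \textbf{main obstacle} is the lower bound, because one must rule out \emph{every} possible path, including those that cleverly interleave the levels. The cleanest route is to introduce a potential function $\varphi \colon V(X_n) \to \mathbb{R}_{\geq 0}$ (calibrated with the $2$-adic valuation) such that $|\varphi(u) - \varphi(v)|$ is controlled on every edge by an explicit function of its level; summing $\varphi$-increments along the geodesic and invoking the extremality of $v^*$ will then deliver the matching lower bound, provided $\varphi$ is chosen in consistency with the upper-bound scaling.
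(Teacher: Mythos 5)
Your upper bound is a genuine, self-contained multi-scale argument, and it differs from the paper's route: the paper uses the symmetries of $\gr_n$ to squeeze ${\rm Diam}(\gr_n)$ between $d(0^n,0^{n-1}1)$ and $2d(0^n,0^{n-1}1)$ and then quotes the exact value $d\bigl(0^{(n^2+3n+2)/2},0^{(n^2+3n)/2}1\bigr)=n2^n+1$ from Lemmas 3--4 of \cite{omega_periodic}, interpolating in $n$. Your recursion $D(m)\le 2D(j)+2^{m-j-1}$ with $m-j\approx\sqrt{2m}$ does telescope to $O(\sqrt n\,2^{\sqrt{2n}})$, so the upper half is essentially right. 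One caveat: translation by a valuation-$(j-1)$ vertex preserves only the edges of level $\le j-1$ (it shifts the residue classes mod $2^k$, $k\ge j$, that define the higher-level edges), so ``local translation invariance'' does not let you reuse the true distance bound $D(j)$ in the correction step; you must run the recursion for the modified quantity ``shortest path inside the window using only levels $\le j-1$''. This is fixable and does not change the order.

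The genuine gap is the lower bound, which you flag as the main obstacle but never prove. The displacement identity $\sum_j\pm c_j2^j\equiv v^*$ together with the sparsity of the valuation-$(j-1)$ sublattices does not by itself force $\ell\ge c\sqrt n\,2^{\sqrt{2n}}$: what is needed is a quantitative statement of the form ``between consecutive uses of a level-$j$ edge the path must spend a controlled number of steps at lower levels'', and the claim that these constraints can be ``packaged as inequalities dual to the upper-bound recursion'' is precisely the assertion to be proved, not a proof. Likewise the potential function $\varphi$ is never exhibited, and no candidate with the required edge-increments is proposed; note also that the natural extremal vertex is essentially $2^{m-1}$ (the image of $0^{m-1}1$), a single power of two, rather than a vertex whose displacement ``requires contributions at every scale''. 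Establishing this lower bound is exactly the content of the inductive distance computation in Benjamini--Hoffman's Lemma 3, which the paper invokes instead of arguing from scratch; as written, your proposal proves only the upper half of the theorem.
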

\begin{proof}
The statement basically follows from Lemmas 3 and 4 in
\cite{omega_periodic}. Using the symmetries of the graph $\gr_n$
one can check that the diameter ${\rm Diam}(\gr_n)$ is bounded
from above by the distance $2d(0^n,0^{n-1}1)$, and of course
$d(0^n,0^{n-1}1)$ is the lower bound. It is shown in Lemma~3 that
\[
d\left(0^{\frac{n^2+3n+2}{2}},0^{\frac{n^2+3n}{2}}1\right)=n2^n+1,
\]
for all $n\geq 1$. Hence, for $m\approx \frac{n^2}{2}$, we get that ${\rm Diam}(\gr_m)$ is equal to $\sqrt{m}\,
2^{\sqrt{2m}}$ up to a bounded multiplicative constant dependent on $m$.
\end{proof}

It is proved in \cite{omega_periodic} that the graph
$\gr_{0^{\infty}}$ has intermediate growth. We generalize this
result in the next theorem.

\begin{thm}\label{thmintermediate}
All orbital Schreier graphs $\gr_{\omega}$ for $\omega\in\{0,1\}^{\infty}$ of the group $G$ have intermediate growth. 
\end{thm}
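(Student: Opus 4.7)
My plan is to transfer the question via Theorem \ref{thmschreierisom} to the graphs $X_\omega$ and then sandwich their growth using the finite graphs $X_n \cong X_\omega \bmod 2^n$ (Proposition \ref{prop_QuotGraph_Xn}) together with the diameter estimates of Theorem \ref{thm_growth_diam}. First I dispose of the non-generic case: if $\omega$ is cofinal or anticofinal with $0^\infty$, then Theorem \ref{thm_IsomProblem} gives $X_\omega \cong X_{0^\infty}$, whose intermediate growth was established by Benjamini and Hoffman in \cite{omega_periodic}. So I henceforth assume that $\omega$ contains infinitely many $0$s and infinitely many $1$s.

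For the superpolynomial lower bound, the quotient map $\pi_n : X_\omega \to X_n$ is a covering of $4$-regular graphs (each vertex has the same configuration of incident edges as its image, with edges of level $k \geq n$ becoming loops at the base). Consequently, any walk in $X_n$ based at $v_n := \pi_n(0)$ lifts to a walk of the same length in $X_\omega$ based at $0$. For $r \geq \mathrm{Diam}(X_n)$ every vertex of $X_n$ admits such a lift inside $B_{X_\omega}(0, r)$, so
\[
|B_{X_\omega}(0, r)| \geq |V(X_n)| = 2^n.
\]
Plugging in the upper bound $\mathrm{Diam}(X_n) \leq d\sqrt{n}\,2^{\sqrt{2n}}$ from Theorem \ref{thm_growth_diam} and solving for the largest admissible $n$ yields $n \geq c_1 (\log_2 r)^2$ for some $c_1 > 0$ and all large $r$. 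Hence $|B_{X_\omega}(0, r)| \geq 2^{c_1 (\log_2 r)^2}$, which is superpolynomial in $r$.

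For the subexponential upper bound I invoke the local convergence $(X_n, v_n) \to (X_\omega, 0)$ established around \eqref{federico}: for each $r$ and all $n$ sufficiently large, $B_{X_\omega}(0, r) \cong B_{X_n}(v_n, r)$, so $|B_{X_\omega}(0, r)| \leq |V(X_n)| = 2^n$. It remains to bound the stabilization threshold $N(r)$ by $O((\log r)^2)$. Using the lower bound $\mathrm{Diam}(X_n) \geq c\sqrt{n}\,2^{\sqrt{2n}}$ of Theorem \ref{thm_growth_diam}, once $n \sim c_2 (\log_2 r)^2$ one has $\mathrm{Diam}(X_n) > 2r$, and I argue that the covering $\pi_n$ is then forced to be injective on $B_{X_\omega}(0, r)$: if two distinct vertices of the ball had the same image, the concatenated walk between them (through $0$, of length $\leq 2r$) would project to a non-contractible closed walk in $X_n$ of length $\leq 2r$, realizing a non-trivial fiber translation and thereby contradicting the diameter bound. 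This yields $|B_{X_\omega}(0, r)| \leq 2^{c_2 (\log_2 r)^2}$, subexponential in $r$.

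The main obstacle is making this injectivity argument precise. The covering $\pi_n$ is non-regular in the generic case, since translations by $2^n$ are not graph automorphisms of $X_\omega$ (see Corollary \ref{cor_AutomorphisGroup}); consequently, the standard regular-covering relation between the length of a loop in $X_n$ and the $\mathbb{Z}$-displacement of its lift to $X_\omega$ does not apply verbatim. Making this quantitative most likely proceeds through the explicit isomorphism $I_m \to I_m \bmod 2^n$ from the convergence argument, combined with a careful bound on the maximum edge span occurring inside the interval $I_m$ that contains $B_{X_\omega}(0, r)$; together with Theorem \ref{thm_growth_diam}, this should pin down $N(r) = O((\log r)^2)$ and close the sandwich, giving intermediate growth for $X_\omega$, and hence for $\Gamma_\omega$ via Theorem \ref{thmschreierisom}.
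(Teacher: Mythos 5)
Your lower bound is fine and is essentially the paper's own argument (balls of radius comparable to $\mathrm{Diam}(\gr_n)$ contain the $2^n$ vertices of a fundamental domain, and Theorem~\ref{thm_growth_diam} converts this into a super-polynomial estimate). The problem is the upper bound, which is exactly where you diverge from the paper, and there the argument has a genuine gap. Your key claim is that once $\mathrm{Diam}(X_n)>2r$ the projection $\pi_n$ is injective on $B_{X_\omega}(0,r)$, justified by saying that two ball points in the same fiber would produce a non-liftable closed walk of length $\le 2r$ in $X_n$, ``contradicting the diameter bound.'' But a short non-liftable closed walk does not contradict a large diameter: $X_n$ always contains non-liftable closed walks of length $1$ and $2$ regardless of its diameter (the loops at $0$ and $2^{n-1}$, whose lifts are the level-$\ge n$ edges of $X_\omega$, and the double edge between $2^{n-2}$ and $2^{n-2}+2^{n-1}$ formed by the level-$(n-1)$ chords, around which a length-$2$ walk lifts to the path $v\to v+2^{n-1}\to v+2^n$). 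What you actually need is not a bound on the length of the shortest non-liftable cycle in $X_n$, but a lower bound on how far from the basepoint $0$ in $X_\omega$ one must travel before such a cycle can be traversed --- essentially a lower bound, uniform in $\omega$, on $d_{X_\omega}\bigl(0,\,-a^{\omega}_{n-1}\bigr)$ and on the distance to vertices incident to edges of level $\ge n-1$. That is a statement about distances inside $X_\omega$, not about $\mathrm{Diam}(X_n)$, and Theorem~\ref{thm_growth_diam} does not supply it; your closing paragraph concedes that this is the missing quantitative step, so the sandwich is not closed. (The appeal to the local convergence $(X_n,v_n)\to(X_\omega,0)$ has the same defect: it gives an isomorphism of $r$-balls only for $n$ beyond an unspecified threshold $N(r)$, and bounding $N(r)$ is again the whole difficulty.)

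For comparison, the paper sidesteps all of this by working with the group rather than the covering: writing every point of $B(\omega,n)$ as $v\omega_1$ with $v\in\{0,1\}^k$ and $\omega_1=h(x_{k+1}x_{k+2}\ldots)$, $h\in\nucl(n,k)$, it shows by a contraction-of-syllables argument that restrictions of elements of length $\le n$ at words of length $k=(\log_2 n)(\log_2 n+1)$ lie in the linear-size set $\nucl=\{a^{\varepsilon_1}b^ka^{\varepsilon_2},\,a^k\}$, whence $|B(\omega,n)|\le 2^k\cdot O(n)\sim n^{\log_2 n}$. If you want to keep a purely graph-theoretic route, you would have to prove the distance estimate described above (in the spirit of Lemmas~3 and~4 of \cite{omega_periodic}, but for arbitrary $\omega$ and based at $0$), which is genuine additional work rather than a corollary of Theorem~\ref{thm_growth_diam}.
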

\begin{proof}
The lower bound follows from Theorem~\ref{thm_growth_diam}.
Indeed, every ball $B(\omega,r)$ in the graph $\gr_{\omega}$ of
radius $r\geq 2{\rm Diam}(\gr_n)$ contains at least $2^n$
vertices, which correspond to the integers in the interval
$[0,2^n]$ when we use the identification
$\gr_{\omega}=X_{\omega}$. Hence, the ball $B(\omega,n)$ contains
$\succeq n^{\frac{1}{2}\log_2 n}$ vertices, which gives the
super-polynomial lower bound.

Let us prove the bound from above. Let $l(g)$ for $g\in G$ be the
length of the element $g$ in the generators $a,b$, i.e., $l(g)$ is
equal to the minimal number $n$ such that $g$ can be expressed as
a product $g=s_1s_2\ldots s_n$, with $s_i\in\{a^{\pm 1},b^{\pm
1}\}$. Notice that $l(g|_v)\leq l(g)$ for all words $v$ over the
alphabet $\{0,1\}$.

Fix a sequence $\omega=x_1x_2\ldots\in\{0,1\}^{\infty}$ and
consider the ball $B(\omega,n)$ in the graph $\gr_{\omega}$,
centered at the vertex $\omega$ and of radius $n$. If
$g(v_1\omega_1)=v_2\omega_2$ for $v_1,v_2\in \{0,1\}^k$ and
$\omega_1,\omega_2\in\{0,1\}^{\infty}$ then
$g|_{v_1}(\omega_1)=\omega_2$. Hence, for every fixed $k$, each
sequence in the ball $B(\omega,n)$ is of the form $v\omega_1$ for
some $v\in \{0,1\}^k$ and $\omega_1=h(x_{k+1}x_{k+2}\ldots)$ for
some $h\in \nucl(n,k)$, where
\[
\nucl(n,k)=\{g|_{x_1x_2\ldots x_k} : g\in G \mbox{ and } l(g)\leq
n \}.
\]
It follows that
\begin{equation}\label{eqn_thm_growth_bound_B(w,n)}
|B(\omega,n)|\leq 2^k\cdot |\nucl(n,k)|.
\end{equation}
Let us show that, for every $n$, we can find $k$ such that the
values $2^k$ and $|\nucl(n,k)|$ are small enough.

Consider an element $g\in G$ of length $\leq n$ written as a word
in generators
\begin{equation}\label{eqn_thm_growth_g_generators}
g=a^{\alpha_0} b^{\beta_1} a^{\alpha_1}b^{\beta_2}\ldots
b^{\beta_m}a^{\alpha_m},
\end{equation}
where the interior powers are non-zero. Notice that $a^l|_v\in
\{1,a,a^{-1}\}$ for all words $v$ of length $\geq \log_2 l$, and
$b^l|_v$ is equal to $b^l$ or $a^i$ with $|i|\leq |l|$ for all
finite words $v$. Hence the restriction $g|_v$ for words $v$ of
length $\geq \log_2 n$ can be written in the form
\begin{equation}\label{eqn_thm_growth_b_positions}
a^{\varepsilon_0}\mbox{ {\tt\symbol{"20}} }a^{\varepsilon_1}\mbox{
{\tt\symbol{"20}} }\ldots \mbox{ {\tt\symbol{"20}}
}a^{\varepsilon_m},
\end{equation}
where $\varepsilon_i\in\{-1,0,1\}$ and on every position
{\tt\symbol{"20}} we get a power of $a$ or of $b$. If one of the
places {\tt\symbol{"20}} in the expression  {\tt\symbol{"20}}
$a^{\varepsilon_i}$ {\tt\symbol{"20}} from
(\ref{eqn_thm_growth_b_positions}) is filled with a power of $a$,
then this expression contains at most one position with a power of
$b$. The same holds if $\varepsilon_i=0$. If
$\varepsilon_i\in\{-1,1\}$, then
\begin{align*}
b^{\beta_i}ab^{\beta_{i+1}}|_0&=b^{\beta_i}a^{\beta_{i+1}} &
b^{\beta_i}ab^{\beta_{i+1}}|_1&=a^{\beta_i}ab^{\beta_{i+1}} \\
b^{\beta_i}a^{-1}b^{\beta_{i+1}}|_0&=b^{\beta_i}a^{-1}a^{\beta_{i+1}}
& b^{\beta_i}a^{-1}b^{\beta_{i+1}}|_1&=a^{\beta_i}b^{\beta_{i+1}}
\end{align*}
and in all cases there is only one position with a power of $b$.
Hence, the restriction $g|_v$ for words $v$ of length $\geq
(\log_2 n + 1)$ can be expressed in the form
(\ref{eqn_thm_growth_g_generators}) with $\leq m/2$ positions with
a power of $b$. By applying the same procedure $\log_2 m$ times,
we get an element with at most one position with a power of $b$.
It follows that
\[
g|_v\in \nucl=\{a^{\varepsilon_1}b^ka^{\varepsilon_2}, a^k  :
k\in\mathbb{Z} \mbox{ and } \varepsilon_i\in\{-1,0,1\}\}
\]
for words $v$ of length $\geq (\log_2 n)(\log_2 n+1)$ (here we use
$m\leq n/2$). Notice that the set $\nucl$ contains $\leq 20n$
elements of length $\leq n$. We can apply estimate
(\ref{eqn_thm_growth_bound_B(w,n)}) with $k=(\log_2 n)(\log_2
n+1)$ and get
\[
|B(\omega,n)|\leq 20 n^{\log_2 n + 2}\sim n^{\log_2 n}.
\]
\end{proof}

\bibliographystyle{alpha}
\def\cprime{$'$}

\end{document}